\documentclass[journal,twoside,web]{ieeecolor}
\usepackage{generic}
\usepackage{cite}

\usepackage{amsmath, amssymb, amsthm}
\usepackage{graphicx}
\usepackage{comment}
\usepackage{placeins}
\usepackage{enumerate}
\usepackage{hyperref}

\usepackage[short]{optidef}	
\newtheorem{theorem}{Theorem}

\newtheorem{proposition}{Proposition}
\newtheorem{definition}{Definition}

\newtheorem{corollary}{Corollary}

\newtheorem{example}{Example}

\usepackage{xcolor}

\newcommand{\R}{\mathbb{R}}
\newcommand{\A}{\mathcal{A}}

\newcommand{\dd}{\mathrm{d}}

\newcommand{\I}{{\mathcal{I}}}

\newcommand{\B}{{\mathcal{B}}}

\newcommand{\BNLP}{{\mathrm{BNLP}}}

\definecolor{darkgreen}{rgb}{0.0,0.5,0.0}

\def\BibTeX{{\rm B\kern-.05em{\sc i\kern-.025em b}\kern-.08em
    T\kern-.1667em\lower.7ex\hbox{E}\kern-.125emX}}
\begin{document}
\title{Real-Time Algorithms for Model Predictive Control of Hybrid Dynamical Systems
}
\author{
	Armin Nurkanovi\'c,
	Anton Pozharskiy,
	Moritz Diehl
\thanks{
   Armin Nurkanovi\'c and Anton Pozharskiy are with the Department of Microsystems Engineering (IMTEK), University of Freiburg, Germany, Moritz Diehl is with the Department of Microsystems Engineering (IMTEK) and Department of Mathematics, University of Freiburg, Germany. Corresponding author: \texttt{armin.nurkanovic@imtek.uni-freiburg.de}. \\ 
	This research was supported by DFG via projects 504452366 (SPP 2364), 560056112 (robust MPC), 535860958 (ALeSCo) and 525018088 (MAWERO), and by BMWK via 03EN3054B.}}

\maketitle

\begin{abstract}
	Model predictive control (MPC) of hybrid dynamical systems is challenging because the associated optimization problem is nonsmooth and the resulting feedback law is discontinuous. 
	This paper develops real-time MPC algorithms for nonlinear hybrid systems modeled as dynamical complementarity systems. 
	The resulting optimal control problems are formulated as mathematical programs with complementarity constraints (MPCCs).
	We show that the solution map of parametric MPCCs is discontinuous, and that standard nonlinear-programming-based approaches may become infeasible when the hybrid system switches. 
	To address this, we introduce three real-time hybrid MPC schemes whose feedback phase solves a quadratic program with complementarity constraints per sample, yielding local discontinuous piecewise affine approximations of the MPC feedback law. 
	Moreover, we derive continuity and differentiability results for parametric MPCCs, and establish conditions under which the approximation error of our new hybrid MPC algorithms remains uniformly bounded despite solution discontinuities. 
	The algorithms are demonstrated on a robotic manipulation example, where contact sequences are discovered online.
\end{abstract}

\begin{IEEEkeywords}
Predictive control, hybrid system, optimal control, optimization.
\end{IEEEkeywords}

\vspace{-0.34cm}

\section{Introduction}\label{sec:intro}
\IEEEPARstart{M}{odel} predictive control (MPC) is a feedback-control strategy that directly handles constraints and multivariable dynamics by repeatedly solving finite-horizon optimal control problems in real-time.
For linear and mildly nonlinear systems with continuous controls, it is a mature and widely applied methodology, with successful applications ranging from agile drone racing~\cite{Romero2022} to large-scale chemical plants~\cite{Qin2003}.

However, the dynamics of many practical systems cannot be described by smooth nonlinear differential equations because they exhibit nonsmooth \textit{state-dependent events}, such as contact making and breaking in robotics~\cite{Brogliato2020,Posa2014}, phase transitions in chemical processes, and flow reversals in transport networks~\cite{Baumrucker2009}.
This leads to \textit{hybrid dynamical systems}, characterized by an interplay of discrete and continuous dynamics, which makes the application of MPC computationally challenging.
The theoretical properties of hybrid MPC are well studied~\cite{Lazar2009,Sanfelice2026}, and several computational advances exist~\cite{Bemporad1999b,Nurkanovic2023f,Kong2024}.
However, unlike in the smooth MPC case~\cite{Diehl2009c}, there is currently no established generic real-time MPC framework for hybrid systems that provides accurate solution approximations at limited computational cost.
In this paper, we propose several hybrid MPC algorithms aimed at minimizing the computational load per sampling instant, while keeping approximation errors small and uniformly bounded.
We focus on the dynamic complementarity systems (DCS) formalism for hybrid systems, which covers most applications of interest and has favorable computational properties; cf. Section~\ref{sec:problem_formulation}.
We consider state-dependent discontinuities only, and do not treat discrete control decisions, which require a different set of modeling and analytic tools~\cite{Sager2005,Kirches2010y}.

\begin{figure}[t]
	\vspace{0.2cm}
	\centering
	\includegraphics[width=0.47\textwidth]{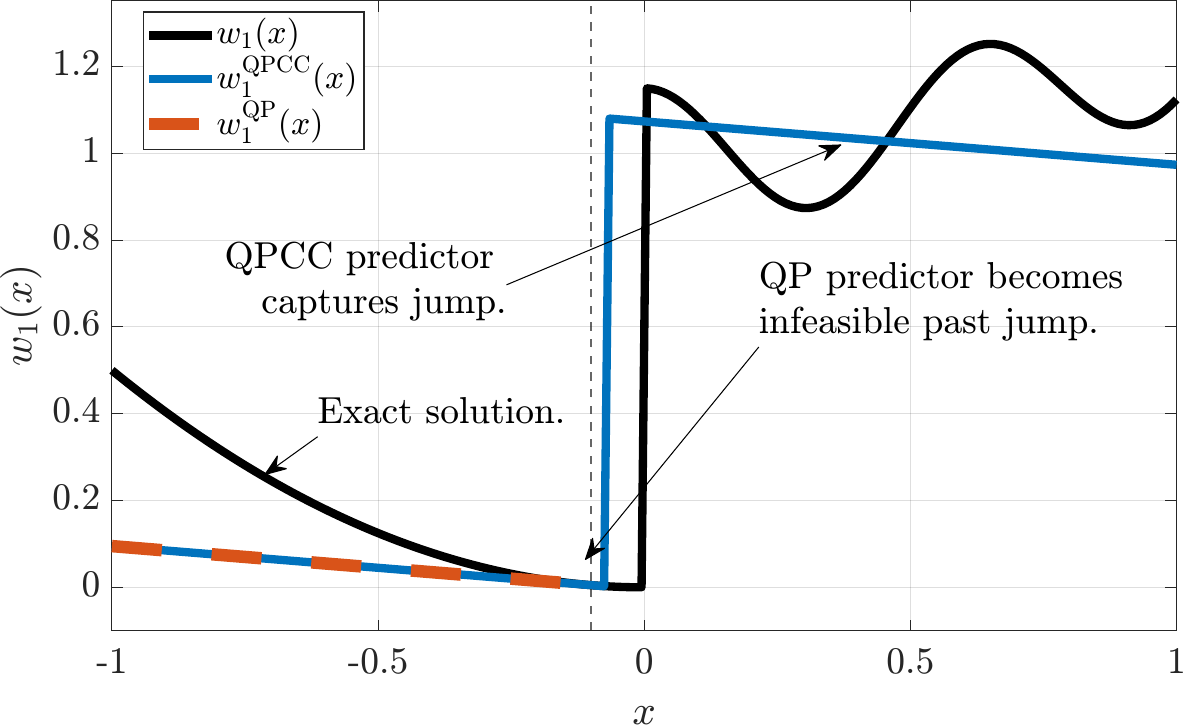}
	\vspace{-0.2cm}
	\caption{Solution map of a parametric MPCC~\eqref{eq:patthfollow_mpcc}, and its QPCC and QP approximations at $\bar x=-0.1$. 
	The QP predictor matches the QPCC as long as the complementarity active sets agree, and becomes infeasible afterwards. 
	The QPCC predictor captures the solution jump.
	}
\label{fig:control_law}
	\vspace{-0.6cm}
\end{figure}

Classical nonlinear MPC solves a sequence of related smooth parametric nonlinear programs (NLPs), where the parameter is typically the latest state estimate $x_k$.
Established real-time MPC algorithms exploit the similarity of neighboring problems and therefore restrict the amount of online computation~\cite{Diehl2009c}.
Most such methods combine the sequential quadratic programming (SQP)~\cite{Diehl2001} method with the implicit function theorem (IFT)~\cite{Fiacco1983} applied to the necessary optimality conditions of the parametric NLP~\cite{Jittorntrum1984,Ralph1995} to obtain a (piecewise) affine approximation of the optimal solution map.
This reduces the online feedback delay to the time needed for solving a single linear system or quadratic program (QP)~\cite{Diehl2001,Zavala2009} after the new state estimate $x_k$ becomes available; cf. Sec.~\ref{sec:mpc}.

Discrete-time optimal control problems subject to DCS lead to parametric mathematical programs with complementarity constraints (MPCCs)~\cite{Luo1996,Scheel2000,Kim2020}.
As for parametric NLPs, the SQP method can be applied to NLP reformulations of MPCCs, provided that a QP solver, capable of detecting a linearly independent active set basis, is available~\cite{Fletcher2006}.
However, interior-point QP solvers, which often perform better in MPC applications~\cite{Kouzoupis2018}, generally do not satisfy this requirement.
Even if such a QP solver is available, classical SQP-based methods face additional difficulties.
In particular, standard constraint qualifications (cf. Sec.~\ref{sec:nlp_theory}), which are necessary for the IFT-based sensitivity results underlying classical real-time MPC algorithms~\cite{Jittorntrum1984,Ralph1995}, are violated by MPCCs at all feasible points.
On the other hand, compared to parametric NLPs, the parametric properties of MPCCs have received considerably less attention~\cite{Scheel2000,Hu2002,Guo2014}.
We show that the solution map of parametric MPCCs, and thus the resulting hybrid MPC feedback law, is generically discontinuous and cannot be well approximated by QP steps alone.
We show that the QP subproblem is typically infeasible when the hybrid system must switch for a new state estimate $x_{k}$, cf. Figure~\ref{fig:control_law}.

The central premise of this paper is that real‑time hybrid MPC must solve at least quadratic programs with complementarity constraints (QPCCs), and not ordinary QPs.
This view is consistent with several recent works that employ QPCCs in related settings~\cite{Kim2025,Li2025,Aydinoglu2024}.
Section~\ref{sec:path_follow} illustrates that a QPCC can produce discontinuous solution approximations and, unlike a QP, can detect new switching sequences.
Furthermore, solving a sequence of QPCCs~\cite{Scholtes2004,Nurkanovic2026}, i.e., the SQPCC method, is a natural extension of the SQP method~\cite{Nocedal2006} to MPCCs.
Despite their difficulty, MPCCs and QPCCs can now be solved reliably enough to support tailored real-time hybrid MPC algorithms~\cite{Ralph2004,Nurkanovic2024b,Kanzow2015,Pozharskiy2026} (cf. Section~\ref{sec:examples}).
Furthermore, MPCC-tailored methods based on continuous optimization techniques are usually orders of magnitude faster than integer methods applied to MPCC reformulations~\cite{Hempel2017,Nurkanovic2025,Nurkanovic2022a,Hall2024}.

\paragraph{Contributions.}
This paper contributes as follows:
\begin{itemize}
	\item We extend existing sensitivity results for parametric MPCCs~\cite{Scheel2000,Hu2002}.
	In particular, under weaker assumptions than in~\cite{Scheel2000,Hu2002}, we characterize when solution selections remain piecewise continuously differentiable under complementarity active-set changes, when they are locally unique (Theorem~\ref{th:mpcc_sesitivity}), and when discontinuities necessarily occur (Theorem~\ref{th:mpcc_jump}).
	
	\item In Proposition~\ref{prop:directional_derivaies}, we show how to compute directional derivatives of MPCC solution maps by solving a single QPCC, thereby contributing to the field of differentiable optimization for MPCCs.
	
	\item In Section~\ref{sec:path_follow}, we analyze the local approximation properties of the SQPCC method applied to parametric MPCCs with continuously varying parameters, and derive, in Theorem~\ref{th:path_follow_sqpcc_formal}, conditions under which the approximation error remains uniformly bounded.
	
	\item Section~\ref{sec:mpc} introduces three distinct real-time hybrid MPC algorithms. 
	The first approach does a single SQPCC iteration per sampling instant, generalizing classical real-time iterations~\cite{Diehl2001}.
	The second follows the advanced-step idea of Zavala and Biegler~\cite{Zavala2009}, where the next state ${x}^{\mathrm{pred}}_{k+1}$ is predicted and an advanced problem is solved before the true state ${x}_{k+1}$ becomes available, and the feedback step then solves a QPCC to capture possible discontinuities.
	Further variants solve the advanced problem only approximately, in the spirit of advanced-step real-time iterations introduced in~\cite{Nurkanovic2019a}.
	Based on Theorem~\ref{th:path_follow_sqpcc_formal}, we discuss error bounds for the new algorithms.
	None of the proposed algorithms requires an initial switching-sequence guess or heuristics for updating it, switches are discovered automatically online.
	
	\item We provide an open-source prototype implementation of the proposed algorithms in the software package \texttt{nosnoc}~\cite{Nurkanovic2022b}, and illustrate in Sec.~\ref{sec:examples} the proposed real-time hybrid MPC algorithms on a robotics example. 
\end{itemize}

\paragraph{Outline.}
Sec.~\ref{sec:problem_formulation} describes the problem classes considered in this paper.
Sec.~\ref{sec:nlp_theory} gives basic background on NLPs and MPCCs, and Sec.~\ref{sec:sqpcc} recalls a recent local convergence result for the SQPCC method.
Sec.~\ref{sec:mpcc_sensitivity} studies parametric MPCCs and differentiability properties of their solution maps.
In Section~\ref{sec:path_follow}, we analyze SQPCC-based path-following algorithms and derive uniform error bounds.
Based on these results, Sec.\ref{sec:mpc} introduces several real-time hybrid MPC algorithms and discusses their error bounds.
Finally, Sec.~\ref{sec:examples} illustrates the approach on a numerical example, and Sec.~\ref{sec:conclusion} discusses directions for future research.

\paragraph{Notation.} 
For the Jacobian of a function $f: \R^n \to \R^m$ we will use the notation $\nabla f(w) = \frac{\partial f}{\partial w}(w)^\top \in \R^{n \times m}$.
An open ball centered at $\bar w \in \R^n$ with radius $\varepsilon >0$ is denoted by $\B_{\varepsilon}(\bar w) := \{ w \in \R^n \mid \| w- \bar w \| < \varepsilon \}$.
The concatenation of two vectors $ x \in \R^{n_x}, y \in \R^{n_y}$ is denoted by $ (x,y) := [x^\top, y^\top]^\top$. 
The concatenation of several vectors is defined accordingly.
We use subscripts $x_k$ to indicate the time index. 
In static problems the subscripts in $x_j$ denote the $j$th component, and if we have both, we use $x_{k,j}$.
Superscripts in $w^k$ are used for the iteration index.
Further, for function evaluations at the $k$th iterate we use shorthands of the form of $H^k := H(w^k)$.
We use the symbol $L$ for the usual NLP Lagrangian, and calligraphic $\mathcal{L}$ for the MPCC--Lagrangian.


\section{Problem formulation}\label{sec:problem_formulation}
MPC requires the repeated solution of finite-horizon discrete-time optimal control problems (OCPs).
In this paper, we consider a hybrid OCP subject to a discrete-time dynamic complementarity system (DCS), to be solved at every sampling time $t_k$:
\vspace{-0.05cm}
\begin{mini!}[2]
	{\substack{s,y,u}}{\textstyle \sum_{i=0}^{N-1} \ell(s_i,u_i) + E(s_N)}
	{\label{eq:discrete_time_ocp}}{}
	\addConstraint{s_0}{= x_k}
	\addConstraint{s_{i+1}}{= \phi_f(s_i,y_i,u_i), \label{eq:discrete_time_ocp_dcs_dyn}}{\quad i = 0,\ldots,N-1}
	\addConstraint{0}{= \phi_{\mathrm{int}}(s_i,y_i,u_i),}{\quad i = 0,\ldots,N-1}
	\addConstraint{0 \leq \phi_G(y_i) \perp \phi_H(y_i)}{\geq 0, \label{eq:discrete_time_ocp_dcs_cmp}}{\quad i = 0,\ldots,N-1}
	\addConstraint{g(s_i,y_i, u_i)}{\leq 0, }{\quad i = 0,\ldots,N-1 \label{eq:discrete_time_ocp_path}}
	\addConstraint{r(s_N)}{{\leq 0} \label{eq:discrete_time_ocp_term}.}
\end{mini!}
The vector $x_k \in \R^{n_x}$ corresponds to the discrete-time differential states at time $t_k$, and $u_i \in \R^{n_u}$ the discrete-time control input on the interval $t\in [t_i,t_{i+1})$.
The vectors $s_i \in \R^{n_x}$ collect the state trajectory for $i = 0,\ldots,N$, and the vectors $y_i$ collect all algebraic and possibly internal variables of the numerical integration method.
All these variables are collected in the vectors
$s = (s_0,\ldots,s_N)$, 
$y = (y_0,\ldots,y_{N-1})$, and 
$u = (u_0,\ldots,u_{N-1})$. 
The functions $\phi_f$, $\phi_{\mathrm{int}}$, $\phi_G$, and $\phi_H$ define the discrete-time dynamic complementarity system (DCS).
The term $\ell:\R^{n_x} \times \R^{n_u} \to \R$ is the running cost function for the $i$th stage, and $E:\R^{n_x} \to \R$ is the terminal cost function.
The functions $g: \R^{n_x} \times \R^{n_u} \to \R^{m_g}$ and $r: \R^{n_x} \to \R^{m_r}$ define the path and terminal constraints in~\eqref{eq:discrete_time_ocp_path} and~\eqref{eq:discrete_time_ocp_term}, respectively.

The DCS in~\eqref{eq:discrete_time_ocp_dcs_dyn}--\eqref{eq:discrete_time_ocp_dcs_cmp} covers numerous hybrid dynamical systems, including discretizations of Filippov systems~\cite{Filippov1964}, piecewise smooth systems~\cite{Hempel2017,Nurkanovic2024a,Nurkanovic2024c,Brogliato2020}, complementarity Lagrangian systems (for robotics with friction and impacts)~\cite{Brogliato2016,Posa2014}, projected dynamical systems and (time-varying) sweeping processes~\cite{Brogliato2006,Pozharskiy2025}, relay systems~\cite{Johansson2002}, mixed logical dynamical systems and max–min–plus scaling systems~\cite{Heemels2001}, and hybrid systems with hysteresis~\cite{Nurkanovic2022a}.
See~\cite[Section~3]{Brogliato2020}, \cite{Nurkanovic2023f}, and \cite{Heemels2001} for equivalence results between DCS and the various other hybrid systems formalisms.

In our exposition, we highlighted DCS-induced MPCCs~\eqref{eq:discrete_time_ocp}.
However, the proposed algorithms and our theoretical results cover also other sources of complementarity constraints, e.g., general piecewise smooth functions~\cite[Chapter~11]{Biegler2010} or logical path, vanishing~\cite{Achtziger2008} and state-triggered constraints~\cite{Szmuk2020}.

Here we assume that a discrete-time OCP~\eqref{eq:discrete_time_ocp} is given, but highlight that the time discretization of hybrid systems requires tailored transcription methods~\cite{Baumrucker2009,Nurkanovic2024a} to avoid the generation of spurious local minima~\cite{Stewart2010,Nurkanovic2020}.

By defining $w = [s^\top, y^\top, u^\top]^\top \in \R^n$ and suitable functions 
$f:\R^n \to \R$ for the objective, 
$g:\R^n \to \R^{m_g}$ for the equality constraints, 
a selection matrix $M \in \R^{m_h} \times \R^{n_x}$,
$h:\R^n \to \R^{m_h}$ for the inequality constraints, and 
$G,H:\R^n \to \R^m$ for the complementarity functions, 
the discrete-time OCP~\eqref{eq:discrete_time_ocp} can be written compactly as a generic parametric MPCC:
\begin{mini!}[2]
	{\substack{w\in \R^{n}}}{f(w)\label{eq:mpcc_ocp_obj}}
	{\label{eq:mpcc_ocp}}{}
	\addConstraint{h(w)+M x_k}{=0 \label{eq:mpcc_ocp_eq}}
	\addConstraint{g(w)}{\leq 0 \label{eq:mpcc_ocp_ineq}}
	\addConstraint{0 \leq G(w) \perp H(w)}{\geq 0. \label{eq:mpcc_ocp_comp}}
\end{mini!}
In ideal hybrid MPC, once a new state estimate $x_{k}$ becomes available, one would instantaneously compute the global optimal solution $\bar w(x_k)$ and apply the first-stage control $\bar u_0(x_k)$ to the plant.
Since this is impossible in practice, as established in nonlinear MPC~\cite{Diehl2009c}, we derive algorithms that compute approximations $u_0(x_k) \approx \bar u_0(x_k)$ of local minimizers of the MPCC~\eqref{eq:mpcc_ocp} with as little computation effort as possible, while ensuring that the numerical error remains uniformly bounded, even if the hybrid dynamical system switches.

Note that if the complementarity active sets of the DCS in~\eqref{eq:discrete_time_ocp} were known in advance for some $x_k$, then the OCP would reduce to a standard NLP rather than an MPCC.  
In other words, the switching sequence of the hybrid system would be fixed. 
In Section~\ref{sec:mpcc_theory}, we will see that such NLPs are called branch NLPs of the MPCC.
However, for every new state estimate $x_{k+1}$, such a fixed sequence does not have to remain feasible. 
Hence, the goal of hybrid MPC is to implicitly discover a locally optimal switching sequence online.
\section{Preliminaries on nonlinear programming}\label{sec:nlp_theory}
To state our novel results in a self-contained way, we review basic optimality and sensitivity results for standard nonlinear programs (NLP), as well as basic MPCC-tailored notions.
\subsection{Regularity and local optimality}
Consider the following parametric NLP:
\begin{mini!}[2]
	{\substack{w \in \R^{n}}}{f(w,p)\label{eq:nlp_obj}}
	{\label{eq:nlp}}{}
	\addConstraint{h(w,p)}{=0 \label{eq:nlp_eq}}
	\addConstraint{g(w,p)}{\leq0, \label{eq:nlp_ineq}}
\end{mini!}
where $f: \R^n \times \R^{n_p} \to \R$, and $h:\R^n \times \R^{n_p} \to \R^{m_h},\ g:\R^{n} \times \R^{n_p} \to \R^{m_g}$ are twice continuously differentiable. 
The vector $p \in \R^{n_p}$ is a fixed parameter, and to keep the notation lighter we drop the dependencies on $p$ whenever clear from the context.
The Lagrange multipliers for the equality and inequality constraints are denoted by $\lambda \in \R^{m_h}$ and $\mu \in \R^{m_g}$, respectively.
The feasible set of the NLP~\eqref{eq:nlp} is denoted by $\Omega := \{ w\in \R^n \mid h(w) = 0, g(w) \leq 0\}$.
The Lagrange function $L: \R^n \times \R^{m_h} \times \R^{m_g} \to \R$ of the NLP~\eqref{eq:nlp} reads as:
\begin{align*}
	L(w,\lambda, \mu) &= f(w) + \lambda^\top h(w) + \mu^\top g(w). 
\end{align*}

The Karush–Kuhn–Tucker (KKT) conditions read as:
\begin{subequations}\label{eq:kkt}
	\begin{align}
		&\nabla f(w) + \nabla h(w) \lambda + \nabla g(w) \mu = 0, \\
		&h(w) = 0,\\
		&0 \leq -g(w) \perp \mu \geq 0.
	\end{align}
\end{subequations}
A point $\bar z:= (\bar w,\bar \lambda,\bar \mu)$ satisfying these conditions is called a KKT point, and $\bar w$ is called a stationary point.
If in addition a constraint qualification holds (defined below), the KKT conditions are first-order necessary conditions for optimality.
The active set at a feasible point $w \in \Omega$ is defined as
	$\A(w) = \{ i \in \{1,\ldots,m_g \} \mid g_i(w) = 0\}$.
Its complement $\A^{\mathrm{C}}(w) = \{1,\ldots, m_g \} \setminus \A(w)$ is the set of inactive constraints.
The strictly and weakly active sets at a KKT point $(x,\lambda,\mu)$ are defined as
	$\A_+(w,\mu) = \{ i \in \A(w) \mid \mu_i > 0\},\,
	\A_0(w,\mu) = \{ i \in \A(w) \mid \mu_i = 0\}$.
We say that strict complementarity holds for a KKT point if 
\(
\mu_i> 0, \ \forall i \in \A(w)
\).
Next, we define some basic regularity concepts in nonlinear programming.

\begin{definition}[LICQ]
	At feasible point $w \in \Omega$, the linear independence constraint qualification (LICQ) is said to hold if the set of vectors $\{ \nabla h_i(w) \mid i = 1,\ldots, m_h\} \cup \{ \nabla g_i(w) \mid i \in \A(w)\}$ are linearly independent.
\end{definition}

The LICQ is sufficient for unique Lagrange multipliers.
We use the notation $\nabla g_{\I}(w) \in \R^{n \times |\I|}$ for a matrix whose columns are the vectors $\nabla g_i(w),\ i \in \I \subseteq \{1,\ldots,m_g\}$.
Next, we define some second-order sufficient conditions (SOSC).
The tangent space of all equality and strictly active constraints is called the strong critical cone:
\begin{align*}
	\mathcal{C}^\mathrm{S}(w,\mu) = \{ d\in \R^n \mid \nabla h(w)^\top d \!=\! 0,\, \nabla g_{\A_+(w,\mu)}(w)^\top d \!=\! 0 
	\}.
\end{align*}
The usual critical cone is defined by adding restrictions w.r.t. weakly active constraints:
$\mathcal{C}(w,\mu) = \{d \in \mathcal{C}^{\mathrm{S}}(w,\mu)  \mid \nabla g_{\A_0(w,\mu)}(w)^\top d \leq 0 \}$.

If strict complementarity holds we have that $\mathcal{C}^{\mathrm{S}}(w,\mu) = \mathcal{C}(w,\mu)$.
We will frequently use the so-called strong second-order sufficient conditions (SSOSC).
\begin{definition}
	The SSOSC is said to hold at a stationary point $w \in \Omega$ if there exist corresponding multipliers $(\lambda,\mu)$ such that $(w,\lambda,\mu)$ is a KKT point, and if the following inequality holds:
	\begin{align}\label{eq:ssosc}
		d^\top \nabla_{ww}^2 L(w,\lambda,\mu) d > 0,\ \forall d \in \mathcal{C}^{\mathrm{S}}(w,\mu) \setminus\{0\}.
	\end{align}
\end{definition}
A less restrictive notion is the usual SOSC, which is obtained if we replace $\mathcal{C}^{\mathrm{S}}(w,\mu)$ by $\mathcal{C}(w,\mu)$ in \eqref{eq:ssosc}.

\subsection{Stability of parametric NLP solutions}\label{sec:nlp_sesnitivity}
Next, we explicitly focus on the dependence on the parameter $p \in \R^{n_p}$ in the NLP~\eqref{eq:nlp}. 
In Sec.~\ref{sec:mpcc_sensitivity}, we extend these results to MPCCs. 
This is crucial for deriving the error bounds of hybrid MPC algorithms.
We restate a classical result on the stability of solutions of parametric NLPs~\cite{Fiacco1983,Jittorntrum1984}.
\begin{theorem}[Theorem~2.4.5, \cite{Fiacco1983}]\label{th:fiacco_nlp_sensitivity}
	If $f$, $g$, and $h$ are twice continuously differentiable in $(w,p)$ in a neighborhood of $(\bar w,\bar p)$, if the SSOSC hold for problem \eqref{eq:nlp} at a KKT point $(\bar w,\bar{\lambda},\bar{\mu})$ corresponding to a fixed $\bar p$, and if the LICQ holds at $\bar w$, then:
	\begin{enumerate}[(a)]
		
		\item $\bar w$ is an isolated local optimal solution of~\eqref{eq:nlp} for $p = \bar p$, with $(\bar \lambda,\bar \mu)$ being the unique Lagrange multiplier vector associated with $\bar{p}$;
		
		\item there exist a neighborhood of $\bar p$, and in this neighborhood there exists a unique continuous vector function
		\(
		z(p) = (w(p), \lambda(p),  \mu(p)) 
		\)
		satisfying the SSOSC for a local minimum of	problem \eqref{eq:nlp} such that
		$z(\bar p) = (\bar w ,\bar \lambda ,\bar \mu )$, and hence $w(p)$ is a locally unique minimizer of \eqref{eq:nlp} with the parameter $p$, and with associated unique Lagrange multipliers $\lambda(p)$ and $\mu(p)$;

		\item the solution map $z(p)$ is Lipschitz in $p$, i.e., there exist constants $0<\gamma<\infty$ and $\delta>0$ such that for every $p \in \B_{\delta}(\bar p)$ it holds that:
		\begin{align}\label{eq:lipschitz_solution_fiacco}
			\|z(p)-\bar z\|\leq \gamma \|p-\bar{p}\|
		\end{align}
		
		\item the optimal value function $V(p)=f(w(p)),p)$ is differentiable with respect
		to $p$ at $p=\bar{p}$, with
		\(
		\nabla_p V(\bar{p})	= \nabla_p L(\bar w,\bar{\lambda},\bar{\mu},\bar{p});
		\)
		
		\item in any direction $v\neq 0$, the uniquely determined, one-sided directional derivative $D_v z(p)$ exists at $\bar{p}$.
	\end{enumerate}
\end{theorem}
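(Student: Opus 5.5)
The plan is to follow Fiacco's and Jittorntrum's route. First establish part (a) from the second-order conditions. Then handle the parametric statements (b)--(e) by applying the implicit function theorem to a reduced KKT system, for each possible active set. This is combined with a piecewise-smooth argument to handle weakly active constraints.

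\textbf{Part (a).} SSOSC implies the usual SOSC, since $\mathcal{C}(\bar w,\bar\mu)\subseteq \mathcal{C}^{\mathrm S}(\bar w,\bar\mu)$. The classical second-order sufficiency theorem then gives that $\bar w$ is a strict local minimizer, satisfying a quadratic growth bound $f(w)\ge f(\bar w)+c\|w-\bar w\|^2$ on feasible points near $\bar w$. LICQ gives uniqueness of the multipliers. To show $\bar w$ is isolated among stationary points, I would argue by contradiction. Suppose stationary points $w^k\to\bar w$ exist. LICQ is an open condition, so their multipliers are unique and converge to $(\bar\lambda,\bar\mu)$. For large $k$, the strictly active set of $w^k$ contains $\A_+(\bar w,\bar\mu)$, and the active set is contained in $\A(\bar w)$. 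Taking the normalized difference $d^k=(w^k-\bar w)/\|w^k-\bar w\|$, subtracting the stationarity equations, and passing to the limit yields a nonzero $d\in\mathcal{C}^{\mathrm S}$ with $d^\top\nabla^2_{ww}L\, d\le 0$. This contradicts SSOSC.

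\textbf{Parts (b) and (c).} For each index set $J$ with $\A_+(\bar w,\bar\mu)\subseteq J\subseteq \A(\bar w)$, consider the square system
\[
F_J(z,p)=\bigl(\nabla_w L(w,\lambda,\mu_J,p),\ h(w,p),\ g_J(w,p)\bigr)=0, \qquad \mu_{J^{\mathrm C}}=0 .
\]
Its Jacobian in $(w,\lambda,\mu_J)$ at $(\bar z,\bar p)$ is the KKT matrix built from $\nabla^2_{ww}L$ and the constraint gradients $[\nabla h,\nabla g_J]$. LICQ makes these gradients full column rank. SSOSC makes $\nabla^2_{ww}L$ positive definite on their null space, since that null space lies in $\mathcal{C}^{\mathrm S}$. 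Hence every such matrix is nonsingular, and the IFT gives $C^1$ local solutions $z_J(p)$. The natural candidate for $z(p)$ is a selection among the $z_J(p)$. To justify this, I would instead apply Robinson's strong regularity theorem to the KKT generalized equation (the normal-cone formulation of~\eqref{eq:kkt}). LICQ together with SSOSC is exactly Robinson's sufficient condition for strong regularity. The theorem therefore provides a unique KKT point $z(p)$ near $\bar z$, and this map is Lipschitz in $p$, which is~\eqref{eq:lipschitz_solution_fiacco}. Both LICQ and SSOSC persist under small perturbations by continuity, so $z(p)$ satisfies SSOSC, and part (a) at $p$ shows $w(p)$ is a locally unique minimizer. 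Constraints inactive at $\bar w$ stay inactive, and positive multipliers stay positive, so $z(p)$ locally coincides with one of the $z_J(p)$. This makes $z(p)$ a continuous selection of finitely many $C^1$ functions, i.e.\ a $PC^1$ map.

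\textbf{Parts (d) and (e).} For (d), every $z_J$ passes through $\bar z$. Differentiating $V(p)=f(w_J(p),p)$ with the chain rule and using stationarity together with $\frac{d}{dp}h=0$ and $\frac{d}{dp}g_J=0$, one gets $\nabla_pV=\nabla_pL(\bar w,\bar\lambda,\bar\mu,\bar p)$ for every $J$. Since $V$ coincides locally with one of these branches, all with the same gradient at $\bar p$, and is Lipschitz, $V$ is differentiable at $\bar p$. For (e), a $PC^1$ function is directionally differentiable. More concretely, $D_vz$ is the unique solution of the QP obtained by linearizing the KKT system in direction $v$: equality on $\A_+$, inequality with complementarity on $\A_0$. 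This QP is strictly convex on the relevant subspace by SSOSC, which gives uniqueness.

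\textbf{Main obstacle.} The step I expect to be hardest is the uniqueness and selection argument in (b) when strict complementarity fails. In that case several $z_J$ are simultaneously candidates, and one must show that exactly one is feasible for the sign conditions for each $p$. I would rely on Robinson's strong regularity theorem rather than a hand-built combinatorial argument.
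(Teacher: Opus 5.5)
Your proof is correct and follows the standard argument behind the cited results. The paper gives no proof of its own: it imports the statement from Fiacco and Jittorntrum and cites Ralph for the $\mathrm{PC}^1$ structure and the QP characterization of $D_v z(\bar p)$. Your route is the usual one: Robinson's strong regularity under LICQ plus SSOSC for existence, local uniqueness and Lipschitz continuity; the reduced KKT systems $F_J$ with $\A_+(\bar w,\bar\mu)\subseteq J\subseteq\A(\bar w)$ for the finitely many $C^1$ pieces; and the chain rule on those pieces for (d). Two routine steps you leave implicit are fine. In (d), Lipschitz continuity of $V$ is not needed, because finitely many $C^1$ branches that share value and gradient at $\bar p$ already force differentiability there. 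In (e), ``$\mathrm{PC}^1$ implies directionally differentiable'' holds because the cluster set of the continuous quotient $t\mapsto (z(\bar p+tv)-\bar z)/t$ is connected and lies in the finite set $\{\tfrac{\partial z_J}{\partial p}(\bar p)\,v\}$, so it is a single point.
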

In fact, the function $z(p)$ is a piecewise continuously differentiable ($\mathrm{PC}^1$) near $\bar p$, i.e., it is continuous and there exists a finite family of $\mathrm{C}^1$ functions $z^1(p),\ldots,z^N(p)$, defined on a neighborhood of $\bar p$, such that $z(p)\in\{z^1(p),\ldots,z^N(p)\}$ for all $p$ in that neighborhood~\cite{Ralph1995}.

It follows from the continuity of the solution map $z(p)$ of~\eqref{eq:nlp}, that strictly active constraints remain active and inactive constraints remain inactive for all $p \in \B_{\rho}(\bar p)$ for some $\rho > 0$.
The size of this neighborhood depends on the associated multipliers and the slack of the inactive constraints at $\bar p$.
We state this elementary fact explicitly for later reference. 
\begin{corollary}[Corollary 2.1, \cite{Nurkanovic2026}]\label{lem:active_set_stabilization}
	Regard the parametric NLP~\eqref{eq:nlp} and suppose the assumptions of Theorem~\ref{th:fiacco_nlp_sensitivity} hold true for~\eqref{eq:nlp}, with $(\bar w, \bar \lambda, \bar \mu)$ being the solution at a fixed $\bar p$.
	Then there exist a constant $\rho>0$, such that for every $p \in \B_{\rho} (\bar p)$ it holds that
	\(
	\mathcal{A}_+(\bar w,\bar\mu) \!= \!\mathcal{A}_+(w(p),\mu(p)),
	\,
	\mathcal{A}^\mathrm{C}(\bar w)\! =\! \mathcal{A}^\mathrm{C}(w(p)).
	\)
\end{corollary}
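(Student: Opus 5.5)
The plan is to read the hypotheses of Theorem~\ref{th:fiacco_nlp_sensitivity} as they hold in the cited source: in Fiacco's setting the KKT point $(\bar w,\bar\lambda,\bar\mu)$ at $\bar p$ also satisfies strict complementarity, $\bar\mu_i>0$ for all $i\in\A(\bar w)$. I will use this throughout. Its immediate consequence is $\A_0(\bar w,\bar\mu)=\emptyset$, so the index set $\{1,\ldots,m_g\}$ splits as the disjoint union $\A_+(\bar w,\bar\mu)\cup\A^{\mathrm C}(\bar w)$. This is what turns the two one-sided inclusions below into the stated equalities.

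First, I take the neighborhood $\B_\delta(\bar p)$ from Theorem~\ref{th:fiacco_nlp_sensitivity}(b)--(c), on which the continuous, Lipschitz selection $z(p)=(w(p),\lambda(p),\mu(p))$ exists with $\|z(p)-\bar z\|\le\gamma\|p-\bar p\|$. I then define the margins
\[
\eta_1:=\min_{i\in\A_+(\bar w,\bar\mu)}\bar\mu_i>0,\qquad \eta_2:=\min_{i\in\A^{\mathrm C}(\bar w)}\bigl(-g_i(\bar w,\bar p)\bigr)>0,
\]
with the convention that a minimum over an empty set is $+\infty$.

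Second, I prove the inclusion for the strictly active set. For $i\in\A_+(\bar w,\bar\mu)$, the Lipschitz bound gives $\mu_i(p)\ge\eta_1-\gamma\|p-\bar p\|>0$ whenever $\|p-\bar p\|<\eta_1/\gamma$. Since $z(p)$ is a KKT point, the complementarity condition $0\le -g(w(p),p)\perp\mu(p)\ge0$ then forces $g_i(w(p),p)=0$. Hence $i\in\A_+(w(p),\mu(p))$.

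Third, I prove the inclusion for the inactive set. The constraint function $g$ is $\mathrm C^2$, so the map $p\mapsto g(w(p),p)$ is continuous, and in fact Lipschitz near $\bar p$ with some constant $\gamma'$. For $i\in\A^{\mathrm C}(\bar w)$ this gives $g_i(w(p),p)\le-\eta_2+\gamma'\|p-\bar p\|<0$ for $p$ close enough to $\bar p$. Hence $i\in\A^{\mathrm C}(w(p))$.

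With $\rho:=\min\{\delta,\eta_1/\gamma,\eta_2/\gamma'\}$, both inclusions hold for every $p\in\B_\rho(\bar p)$. To upgrade them to equalities I use the partition. The sets $\A_+(w(p),\mu(p))$ and $\A^{\mathrm C}(w(p))$ are disjoint, because a strictly active index is in particular active. They contain, respectively, the two complementary sets $\A_+(\bar w,\bar\mu)$ and $\A^{\mathrm C}(\bar w)$, whose union is all of $\{1,\ldots,m_g\}$. A disjoint pair containing a partition of the whole index set must coincide with that partition, so both inclusions are equalities.

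The main point of care is the second step: it is the strict complementarity hypothesis that rules out weakly active indices. Without it, an index in $\A_0(\bar w,\bar\mu)$ could move to either $\A_+$ or $\A^{\mathrm C}$ under perturbation, and only the inclusions would survive. So I will make explicit in the proof that this hypothesis is part of Fiacco's assumptions being invoked. The remaining estimates are routine applications of continuity.
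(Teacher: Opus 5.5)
Your continuity estimates in the second and third steps are exactly the paper's argument. The paper justifies the corollary only by the remark preceding it: by continuity of $z(p)$ from Theorem~\ref{th:fiacco_nlp_sensitivity}, strictly positive multipliers stay positive and strictly negative constraint values stay negative. The radius $\rho$ is governed by $\min_{i\in\A_+}\bar\mu_i$ and by the slacks of the inactive constraints.

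The genuine problem is your opening premise. Theorem~\ref{th:fiacco_nlp_sensitivity}, as stated in the paper, assumes only $\mathrm{C}^2$ data, LICQ and SSOSC. Strict complementarity is \emph{not} among its hypotheses. That is precisely why SSOSC is imposed on the strong cone $\mathcal{C}^{\mathrm{S}}$, why $z(p)$ is only $\mathrm{PC}^1$, and why directional derivatives have to be computed from a QP. More importantly, the corollary is invoked in the proof of Theorem~\ref{th:mpcc_sesitivity} for every branch NLP. This includes branch NLPs with weakly active constraints coming from degenerate pairs with $\bar\xi_j=0$ or $\bar\nu_j=0$, where strict complementarity fails by construction. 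By adding that hypothesis, you prove the result only in a case that excludes the one the paper actually needs.

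You are right, however, that without strict complementarity your final partition step cannot work. In fact the literal equalities are then false. Take $\min_w (w-p)^2$ subject to $w\le 0$ at $\bar p=0$. There $\bar w=0$ and $\bar\mu=0$, and LICQ and SSOSC both hold. Yet $\mu(p)=2p>0$ for $p>0$, so $\A_+$ gains the index, and $w(p)=p<0$ for $p<0$, so $\A^{\mathrm C}$ gains it.

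What the paper asserts (``strictly active constraints remain active and inactive constraints remain inactive'') and later uses (weakly active constraints ``stay either active or become inactive'') is the pair of inclusions $\A_+(\bar w,\bar\mu)\subseteq\A_+(w(p),\mu(p))$ and $\A^{\mathrm C}(\bar w)\subseteq\A^{\mathrm C}(w(p))$. The fix is to drop the strict-complementarity premise and the upgrading step. Your second and third steps then prove exactly these inclusions under the stated hypotheses. You can note as a remark that equality holds under the extra assumption $\A_0(\bar w,\bar\mu)=\emptyset$.
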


Theorem~\ref{th:fiacco_nlp_sensitivity}(e) states the existence of directional derivatives in all directions $v \in \R^{n_p}$.
In general, these directional derivatives are computed by solving a certain convex quadratic program (QP), cf.~\cite[Theorem~3]{Jittorntrum1984} or \cite[Theorem~1]{Ralph1995}.
If the parameter in~\eqref{eq:nlp} appears only linearly in the equality constraints, e.g., $g(w,p) = \tilde g(w) + Mp = 0$, 
the directional derivative $D_v z(\bar p)$ can be recovered from the QP solution used in the SQP method~\cite[Theorem~3.6]{Diehl2001}.
This insight is the basis for real-time nonlinear MPC algorithms such as the real-time iteration (RTI)~\cite{Diehl2001} and many of its variations~\cite{Bock2007,Nurkanovic2019a,Zanelli2021c,TranDinh2012b}, which compute a SQP iteration per sampling time.
Sec.~\ref{sec:mpc} generalizes such ideas to the hybrid MPC case.

\subsection{MPCC theory}\label{sec:mpcc_theory}
The MPCC~\eqref{eq:mpcc_ocp} can be reformulated into a standard NLP. 
For a more compact notation, we drop in this section the parameter dependency via the term $Mx_{k}$ in~\eqref{eq:mpcc_ocp_eq}.
By replacing \eqref{eq:mpcc_ocp_comp} by a set of inequality constraints in~\eqref{eq:mpcc_nlp_compG}--\eqref{eq:mpcc_nlp_bilinear}, we obtain the NLP:
\allowdisplaybreaks
\begin{mini!}[2]
	{\substack{w\in \R^{n}}}{f(w)\label{eq:mpcc_nlp_obj}}
	{\label{eq:mpcc_nlp}}{}
	\addConstraint{h(w)}{=0 \label{eq:mpcc_nlp_eq}}
	\addConstraint{g(w)}{\leq0 \label{eq:mpcc_nlp_ineq}}
	\addConstraint{G(w)}{\geq0 \label{eq:mpcc_nlp_compG}}
	\addConstraint{H(w)}{\geq0 \label{eq:mpcc_nlp_compH}}
	\addConstraint{G_i(w)H_i(w)}{\leq0, \quad i=1,\ldots,m. \label{eq:mpcc_nlp_bilinear}}
\end{mini!}
This and other NLP reformulations of the MPCC~\cite{Scheel2000} result in highly degenerate NLPs.
In particular, the constraints \eqref{eq:mpcc_nlp_compG}-\eqref{eq:mpcc_nlp_bilinear} lead to violation of the LICQ (and also weaker conditions) at all feasible points~\cite{Scheel2000}.
As a result, classical NLP theory and algorithms usually cannot be applied successfully to this problem class.
Instead, applying tailored MPCC notions and algorithms resolves most of the issues.
They all rely on the piecewise nature of the MPCC's feasible set.

\subsubsection{First-order optimality conditions}
Algebraic conditions for stationarity of regular NLPs are given by the KKT conditions~\eqref{eq:kkt}. 
For MPCCs, there are more nuances, and there are several first-order stationarity concepts, which we review briefly here. 
For surveys see~\cite{Scheel2000,Kim2020}.

We call an NLP regular if it satisfies some first- and second-order regularity conditions, e.g., LICQ and SSOSC.
Most of the MPCC theory relies on defining auxiliary regular NLPs related to the MPCC.
For this purpose, we define the following index sets which depend on a feasible point $w$ of the MPCC~\eqref{eq:mpcc_ocp}:
\begin{align*}
	\I_{0+}(w) &= \{i \in \{1, \ldots, m \} \mid G_i(w)=0, H_i(w)>0\},\\
	\I_{+0}(w) &=	\{i \in \{1, \ldots, m\} \mid G_i(w)>0, H_i(w)=0\},\\
	\I_{00}(w) &= \{i \in \{1, \ldots, m \} \mid G_i(w)=0, H_i(w)=0\}.
\end{align*}
If clear from the context, we omit the argument $w$ in the index sets.
We call indices $i \in \I_{00}(w)$ degenerate index pairs, and indices $i \in \{1,\ldots,m\} \setminus \I_{00}(w)$ nondegenerate.
Most theoretical difficulties arise in the presence of degenerate indices.
To compactly define branch NLPs below, we define a few more index sets.
Let $\tilde{\mathcal{P}}(w) = \{ \tilde \I(w)  \subseteq  \I_{00}(w)\}$ denote the powerset of the degenerate indices.
Then, at a feasible point $w\in \Omega$, we define the set of index sets: 
\begin{align*}
	\mathcal{P}(w)  = \{ \I \subseteq \{1,\ldots,m\}\mid \exists\, \tilde{\I}  \in \tilde{\mathcal{P}}(w),\, \I = \I_{0+}(w)\cup \tilde{\I}\}.
\end{align*}
Moreover, for a given $\I \in \mathcal{P}(w)$ we define its complement $\I^\mathrm{C} = \{1,\ldots,m\} \setminus \I$.
The number of sets in $\mathcal{P}(w)$ is equal to $N = 2^{c}$, where $c$ is the number of degenerate indices.

Let $\bar w$ be a feasible point of the MPCC~\eqref{eq:mpcc_ocp}.
Using the partitions $\I \in \mathcal P(\bar w)$, we define the $N$ so-called branch NLPs, denoted by $\BNLP_\I$, obtained by fixing the corresponding branches of the L-shaped complementarity set:
\begin{mini!}[2]
	{\substack{w\in \R^{n}}}{f(w)\label{eq:bnlp_obj}}
	{\label{eq:bnlp}}{}
	\addConstraint{h(w)}{=0 \label{eq:bnlp_eq}}
	\addConstraint{g(w)}{\leq0 \label{eq:bnlp_ineq}}
	\addConstraint{G_i(w) =0,\ H_i(w) }{\geq 0,\quad \forall i \in \I(\bar{w}) \label{eq:bnlp_comp1}}
	\addConstraint{G_i(w) \geq 0,\ H_i(w)} {= 0,\quad \forall i \in \I^\mathrm{C}(\bar{w}). \label{eq:bnlp_comp2}}
\end{mini!}
Another useful NLP associated to the MPCC~\eqref{eq:mpcc_ocp} is the so-called relaxed NLP (RNLP), which reads as:
\begin{mini!}[2]
	{\substack{w\in \R^{n}}}{f(w)\label{eq:rnlp_obj}}
	{\label{eq:rnlp}}{}
	\addConstraint{h(w)}{=0 \label{eq:rnlp_eq}}
	\addConstraint{g(w)}{\leq0 \label{eq:rnlp_ineq}}
	\addConstraint{G_i(w) =0,\ H_i(w) }{\geq 0,\quad \forall i \in \I_{0+}(\bar{w}) \label{eq:rnlp_comp1}}
	\addConstraint{G_i(w) \geq 0,\ H_i(w)} {= 0,\quad \forall i \in \I_{+0}(\bar{w}) \label{eq:rnlp_comp2}}
	\addConstraint{G_i(w) \geq 0,\ H_i(w)} {\geq 0,\quad \forall i \in \I_{00}(\bar{w}). \label{eq:rnlp_bi}}
\end{mini!} 
Observe that if $\I_{00}(\bar{w}) = \emptyset$, then there is only one BNLP and it coincides with the RNLP.
The point $\bar{w}$ is a local minimizer of the MPCC if and only if it is a local minimizer of every $\BNLP_\I$~\cite{Luo1996,Scheel2000}. 
Therefore, these auxiliary NLPs provide a natural framework for defining tailored MPCC notions.
\begin{definition}
	The MPCC \eqref{eq:mpcc_ocp} is said to satisfy MPCC--LICQ at a feasible point $\bar w$ if the corresponding RNLP associated with $\bar w$ satisfies the LICQ at the same point $\bar w$.
\end{definition}
\begin{definition}
	The MPCC--Lagrangian is the \textit{standard} Lagrangian for the RNLP, and reads as:
	\begin{align*}
		\begin{split}
		\mathcal{L}(w,\lambda,\mu,\xi,\nu) &= f(w) + \lambda^\top h(w) + \mu^\top g(w) \\
										   &- \xi^\top G(w)- \nu^\top H(w),	
		\end{split}
	\end{align*}
	with the MPCC--Lagrange multipliers $\lambda \in \R^{m_h}$, $\mu \in \R^{m_g}$, $\xi \in \R^{m}$ and $\nu \in \R^{m}$.
\end{definition}
Note that the MPCC--Lagrangian $\mathcal L(\cdot)$ differs from the standard Lagrangian $L(\cdot)$ for the NLP reformulation of the MPCC \eqref{eq:mpcc_nlp}, by omitting the terms $G_i(w)H_i(w)$ and their multipliers.
The MPCC--Lagrangian coincides with the usual Lagrangian of the $\BNLP_\I$ and is independent of the index~$\I$.

By applying the KKT conditions to the RNLP~\eqref{eq:rnlp}, we obtain the so-called strong stationarity (S-stationarity) concept.
In particular, a feasible point $\bar{w}$ is called S-stationary if there exist Lagrange multipliers $\bar{\lambda},\bar{\mu},\bar{\xi}$ and $\bar{\nu}$ such that:
\begin{align*}
	&\nabla_{w} \mathcal{L}(\bar{w},\bar{\lambda},\bar{\mu},\bar{\xi}, \bar{\nu}) = 0,\\
	&h(\bar{w}) = 0,\\
	&0 \leq \bar{\mu} \perp -g(\bar{w}) \geq 0,\\
	& G_i(\bar{w}) = 0,\; \bar{\xi}_i \in \R,\; H_i(\bar{w}) \geq0,\; \bar{\nu}_i = 0,\;  & \forall i \in 	\I_{0+}(\bar{w}),\\
	& G_i(\bar{w}) \geq 0,\; \bar{\xi}_i = 0,\; H_i(\bar{w}) = 0,\; \bar{\nu}_i \in  \R,\;  & \forall i \in 	\I_{+0}(\bar{w}),\\
	& G_i(\bar{w}) =0,\; \bar \xi_i \geq 0, \; & \forall i \in \mathcal{I}_{00}(\bar{w}),\\
	& H_i(\bar{w}) =0,\; \bar \nu_i \geq 0, \; &\forall i \in 	\mathcal{I}_{00}(\bar{w}).
\end{align*}
The last two conditions arise from complementarity slackness, but are fixed because the active sets are known at $\bar w$.
Weaker stationarity concepts can be derived from alternative auxiliary NLPs and from tools of nonsmooth calculus~\cite{Scheel2000}.
They are distinguished by weaker sign conditions on $\bar\xi_i$ and $\bar\nu_i$ for $i \in \I_{00}(\bar w)$, and are given as follows:
\begin{itemize}
	\item Weak Stationarity (W-stationarity): if $\bar{\xi}_i \in \R$ and $\bar{\nu}_i \in \R$ for all $i \in \mathcal{I}_{00}(\bar{w})$.
	\item Abadie Stationarity (A-stationarity): if $\bar{\xi}_i \geq 0$ or $\bar{\nu}_i \geq0$ for all $i \in \mathcal{I}_{00}(\bar{w})$.
	\item Clarke Stationarity (C-stationarity):  if $\bar{\xi}_i\bar{\nu}_i \geq0$ for all $i \in \mathcal{I}_{00}(\bar{w})$.
	\item Mordukhovich Stationarity (M-stationarity):  if either $\bar{\xi}_i >0$ and $\bar{\nu}_i >0$ or $\bar{\xi}_i\bar{\nu}_i =0$ for all $i \in \mathcal{I}_{00}(\bar{w})$.
\end{itemize}
The stationarity concepts are ordered according to decreasing strength as follows: S, M, C, A and W.
If $\I_{00} = \emptyset$, then all multiplier-based stationarity concepts coincide and they reduce to S-stationarity.
Except for S-stationarity, all other stationarity concepts may admit first-order descent directions~\cite{Scheel2000,Kim2020}, even if MPCC--LICQ holds.
A stationarity concept, which does not suffer from this is the so-called B-stationarity, which we recall next.
We consider a definition due to Hu and Ralph~\cite{Hu2002}, formulated in terms of BNLPs.
For other equivalent definitions see, e.g.,~\cite[Sec. 2.1]{Kim2020}.
\begin{definition}\label{def:b_stat_bnlp}
	A feasible point $\bar{w} \in \Omega$ of the MPCC~\eqref{eq:mpcc_ocp} is a piecewise stationary or B-stationary point of the MPCC~\eqref{eq:mpcc_ocp} if $\bar{w}$ is a stationary point of the $\BNLP_\I$ for each $\I \in \mathcal{P}(\bar{w})$, that is for each partition, there exists Lagrange multipliers $\bar{\lambda}^{\I},\, \bar{\mu}^{\I},\, \bar{\xi}^{\I}$ and $\bar{\nu}^{\I}$ such that:
	\begin{subequations}\label{eq:kkt_bnlp}
		\begin{align}
			&\nabla_{w} \mathcal{L}(\bar{w},\bar{\lambda}^\I,\bar{\mu}^\I,\bar{\nu}^\I,\bar{\xi}^\I) = 0,\\
			&h(\bar{w}) = 0,\\
			&0 \leq \bar{\mu}^\I \perp -g(\bar{w}) \geq 0,\\
			&G_i(\bar w) = 0,  & &\forall i \in \I,\\
			&H_i (\bar w) = 0  & &\forall i \in \I^{\mathrm{C}},\\
			&0 \leq  H_i(\bar{w}) \perp \bar{\nu}_i^\I \geq 0,  & &\forall i \in \I,\\
			&0 \leq G_i(\bar{w}) \perp \bar{\xi}_i^\I \geq 0,   & &\forall i \in \I^{\mathrm{C}}.
		\end{align}
	\end{subequations}
\end{definition}
Under MPCC--LICQ, a B-stationary point $\bar w$ has the same uniquely determined multiplier vector for every branch NLP $\BNLP_{\I}$ with $\I \in \mathcal P(\bar w)$~\cite[Proposition 4.3.5]{Luo1996}.
As a result, B-stationarity can be checked by verifying the S-stationarity conditions for a single branch NLP, which removes the combinatorial complexity.
Conversely, if $\bar{w}$ is an S-stationary point of the MPCC~\eqref{eq:mpcc_ocp}, then it is also B-stationary~\cite[Theorem~4]{Scheel2000}.

In summary, B-stationarity is the tightest stationarity concept for MPCCs, and under MPCC-LICQ it is equivalent to S-stationarity.
This is the setting considered in this paper.

\subsubsection{Second-order optimality conditions}
The second-order sufficient conditions for MPCCs are also defined in terms of the RNLP and BNLPs.
They are formulated at an S-stationary point so that a single set of multipliers is valid for all BNLPs.
The strong MPCC critical cone is obtained as the union of the usual or strong critical cones of all $\BNLP_{\I}$ at $\bar w$~\cite{Luo1996}:
\begin{align*}
	&\mathcal{C}_{\mathrm{MPCC}}^{\mathrm{S}}(\bar{w}) \!=\! \!\!\bigcup_{\I \in \mathcal P(\bar w)}\!\!
	\Big\{ d\in\R^n \ \Big|\
	 \nabla h(\bar{w})^\top d = 0,\\
	& \nabla g_{i}(\bar{w})^\top d = 0, \ \forall i \in \A_+(\bar{w},\bar{\mu}),\\ 
	& \nabla G_{i}(\bar{w})^\top d = 0, \ \forall i \in \I \cup \{ j \in \I^{\mathrm{C}} \mid \bar \xi_j >0 \} ,\\ 
	& \nabla H_{i}(\bar{w})^\top d = 0, \ \forall i \in \I^\mathrm{C} \cup \{ j \in \I \mid \bar \nu_j >0 \}
	\Big\}. 
\end{align*}
\begin{definition}[MPCC--SSOSC]
	The MPCC--SSOSC (or the piecewise SSOSC) holds at S-stationary point $\bar{w}$ with multipliers
	$(\bar\lambda,\bar\mu,\bar\nu,\bar\xi)$ if
	\[
	d^\top \nabla_{ww}^2 \mathcal{L}(\bar{w},\bar\lambda,\bar\mu,\bar\xi,\bar\nu)\, d>0
	\quad \forall  d\in \mathcal{C}^\mathrm{S}_{\mathrm{MPCC}}(\bar{w})\setminus\{0\}.
	\]
\end{definition}
The piecewise (S)SOSC implies the standard (S)SOSC for each BNLP.
It is sufficient for local optimality of an S-stationary point.
A stronger condition is the so-called RNLP-SSOSC, which is the usual SSOSC of the RNLP~\eqref{eq:rnlp}.

Next, we define strict-complementarity type conditions for MPCCs, referred to as upper-level strict complementarity (ULSC)~\cite{Scheel2000}.
In addition, in this paper, we define a new weaker condition, the partial ULSC (PULSC).
\begin{definition}[ULSC]\label{def:ulsc}
	Let $\bar w$ be an S-stationary point of the MPCC~\eqref{eq:mpcc_ocp}, with associated MPCC--Lagrange multipliers $(\bar \lambda, \bar \mu, \bar \xi, \bar \nu)$.
	The upper-level strict complementarity (ULSC) condition holds if, for every $i \in \I_{00}(\bar w)$, the multipliers satisfy the condition:
	$\bar \xi_i > 0$ and $\bar \nu_i > 0$.
\end{definition}
\begin{definition}[PULSC]\label{def:pulsc}
	Let $\bar w$ be an S-stationary point of the MPCC~\eqref{eq:mpcc_ocp}, with associated MPCC--Lagrange multipliers $(\bar \lambda, \bar \mu, \bar \xi, \bar \nu)$.
	The partial upper-level strict complementarity (PULSC) condition holds if, for every $i \in \I_{00}(\bar w)$, the multipliers satisfy the condition:
	$\bar \xi_i > 0$ or $\bar \nu_i > 0$.
\end{definition}
\section{Sequential quadratic programming with complementarity constraints (SQPCC)}~\label{sec:sqpcc}
The SQPCC method was introduced by Scholtes~\cite{Scholtes2004}.
This method is a key ingredient for real-time hybrid MPC algorithms introduced in Sec.~\ref{sec:mpc}.
Here, we recall a recent local convergence result that does not require active-set stabilization, strict complementarity, or ULSC~\cite{Nurkanovic2026}.
Complementarity active-set changes in the SQPCC method are relevant, since they correspond to discovering new switches in real-time hybrid MPC.

In classical MPC, SQP iterations are closely linked to parametric NLP sensitivities and piecewise affine solution approximations, cf. Sec.~\ref{sec:nlp_sesnitivity}.
Due to the violation of constraint qualifications in NLP reformulations of MPCCs, this link is lost when SQP is applied to them. 
This motivates the use of SQPCC, where an analogous relation to the parametric MPCC~\eqref{eq:mpcc_ocp} is preserved, as shown in Sec.~\ref{sec:diff_mpcc}.

The SQP method for standard NLPs is derived from a piecewise linearization of the KKT conditions.
In analogy to SQP, in the SQPCC method, the B-stationarity conditions~\eqref{eq:kkt_bnlp} of the MPCC are structurally linearized, cf.~\cite{Scholtes2004,Nurkanovic2026}.
This corresponds to the B-stationarity conditions of a quadratic program with complementarity constraints (QPCC):
\begin{mini!}[2]
	{\substack{\Delta w \in \R^{n}}}{\nabla f^{k,\top} \Delta w + \frac{1}{2} \Delta w^\top Q^k \Delta w \label{eq:qpcc_obj}}
	{\label{eq:qpcc}}{}
	\addConstraint{h^k + \nabla h^{k,\top} \Delta w + Mx}{=0 \label{eq:qpcc_eq}}
	\addConstraint{g^k +\nabla g^{k,\top} \Delta w}{\leq0 \label{eq:qpcc_ineq}}
	\addConstraint{0 \!\leq \!G^k \!+\! \nabla G^{k,\top} \! \Delta w \!\perp\! H^k \!+\! \nabla H^{k,\top}\! \Delta w }{\!\geq \!0 \label{eq:qpcc_comp},}
\end{mini!}
where $Q^k \approx \nabla_{ww}^2 \mathcal{L}(w^k,\lambda^k,\mu^k,\xi^k,\nu^k)$ is a positive definite approximation of the exact Hessian.
It is important to note that the QPCC uses the Hessian of the MPCC Lagrangian $\mathcal L(\cdot)$, and not the standard Lagrangian $L(\cdot)$ of~\eqref{eq:mpcc_nlp}.
The parameter $x$ is fixed in~\eqref{eq:qpcc_eq}, but we keep it for later reference.

The convergence properties of both the SQP and SQPCC methods depend on the problem functions used to define the quadratic subproblem.
We collect all relevant functions in
\begin{align*}
	\Psi(z) := \Big(\nabla_{w} \mathcal L(w,\lambda,\mu,\xi,\nu),\, h(w),\, -g(w),\, G(w),\, H(w)\Big),
\end{align*}
where $z = (w,\lambda,\mu,\xi,\nu)$ collects all primal-dual variables of the MPCC.
The assumptions for convergence are formulated for $\nabla \Psi^\top(z^{k})$ and its approximation $\nabla \tilde \Psi(z^{k})^\top$, where $Q^k$ replaces the exact Hessian of $\mathcal{L}(\cdot)$ in the QPCC~\eqref{eq:qpcc}, cf. Eq.~\eqref{eq:kappa_omega_sqpcc}.
We discuss the assumptions below the theorem.
\begin{theorem}[Theorem 5.1 in~\cite{Nurkanovic2026}]\label{th:sqpcc_local_convergence}
	Let $f:\R^n\to\R$, $h:\R^n\to\R^{m_h}$, $g:\R^n\to\R^{m_g}$, $H:\R^n \to \R^{m}$, and $G:\R^{n} \to \R^{m}$ be twice continuously differentiable in a neighborhood of a point $\bar{w}\in\R^n$.
	Let $\bar{w}$ be a local solution of problem~\eqref{eq:mpcc_ocp}, satisfying MPCC--LICQ and MPCC--SSOSC with the associated unique Lagrange multiplier $(\bar\lambda,\bar\mu,\bar \xi, \bar\nu )\in \R^{m_h}\times \R^{m_g} \times \R^{m} \times \R^{m}$.
	Further, suppose there exist constants $\omega > 0$ and $\bar{\kappa} < \frac{1}{3\gamma}$, where $\gamma > 0$ is a Lipschitz constant of the BNLPs, and a sequence $\{\kappa^k\}$ with $\kappa^k \in [0,\bar \kappa]$ such that, for all $z^{k}$ and $z$, the following inequalities hold:
	\begin{subequations}\label{eq:kappa_omega_sqpcc}
		\begin{align}
			&\| \nabla \Psi(z)^\top - \nabla \Psi(z^k)^\top \| \leq \omega \| z- z^k \|, \label{eq:omega_cond_sqpcc}\\
			&\| \nabla \Psi(z^k)^\top - \nabla \tilde \Psi(z^k)^\top \| \le \kappa^k. \label{eq:kappa_cond_sqpcc}
		\end{align}
	\end{subequations}
	Then there exist a constant $\varepsilon > 0$ such that, for any starting point $z^0 = (w^0,\lambda^0,\mu^0,\xi^0, \nu^0)\in \B_{\varepsilon}(\bar z)$ and $\bar z = (\bar{w},\bar\lambda,\bar\mu,\bar\nu,\bar \xi)$, there exists a (not necessarily unique) sequence 
	$\{(w^k,\lambda^k,\mu^k,\nu^k,\xi^k)\} \subset \B_{\varepsilon}(\bar z)$ such that for $k=0,1,\dots$, the point $w^{k+1}$ is an S-stationary point of the QPCC~\eqref{eq:qpcc} with a positive definite Hessian approximation $Q^k$, and $(\lambda^{k+1},\mu^{k+1},\nu^{k+1},\xi^{k+1})$ is an associated Lagrange multiplier. 
	Moreover, each such sequence converges to $(\bar{w},\bar\lambda,\bar\mu,\bar\xi,\bar\nu)$, and the rate of convergence is linear, satisfying the contraction inequality
	\begin{align}\label{eq:sqpcc_contraction_estimate}
		\| z^{k+1} - \bar z \| \le \alpha^k \| z^{k} - \bar z \| + \beta \| z^{k+1} - \bar z \|^2,
	\end{align}
	with constants $\alpha^k \in [0,1)$ for all $k$ and $\beta > 0$.
	If, in addition, $\{ \kappa^k \} \to 0$, then $\alpha^k \to 0$ and the convergence rate is superlinear; if $\bar \kappa = 0$, then $\alpha^k = 0$ and the convergence rate is quadratic.
\end{theorem}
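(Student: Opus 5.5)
Since the MPCC--LICQ and MPCC--SSOSC at $\bar w$ transfer to every branch NLP, the plan is to reduce the SQPCC iteration to a family of Newton-type iterations on the branch NLPs $\BNLP_\I$, $\I\in\mathcal P(\bar w)$, and then glue them together with a uniform neighbourhood.

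\emph{Step 1: branch NLPs are strongly regular.} For each $\I\in\mathcal P(\bar w)$, MPCC--LICQ gives LICQ for $\BNLP_\I$ at $\bar w$, because its active gradients are a subset of those of the RNLP. Under MPCC--LICQ, S-stationarity yields the same unique multiplier $\bar z$ for all branches. The MPCC--SSOSC contains the SSOSC of each $\BNLP_\I$, since the strong critical cone of every branch lies in $\mathcal C^{\mathrm S}_{\mathrm{MPCC}}(\bar w)$. By Robinson's strong regularity, which is what Theorem~\ref{th:fiacco_nlp_sensitivity} encodes, the linearized KKT generalized equation of each $\BNLP_\I$ at $\bar z$ therefore has a single-valued Lipschitz inverse with constant $\gamma_\I$ on a neighbourhood. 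Since there are finitely many branches, we can take $\gamma=\max_\I\gamma_\I$ and a common neighbourhood radius.

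\emph{Step 2: QPCC stationary points are branch QP stationary points.} An S-stationary point of the QPCC~\eqref{eq:qpcc} near $\bar w$ has nondegenerate indices of $\bar w$ fixed. This follows because $G^k_i$ and $H^k_i$ stay bounded away from zero for $i\notin\I_{00}(\bar w)$ once $z^k$ and the step are small. Hence it is a KKT point of the QP obtained by linearizing some $\BNLP_\I$ with $\I\in\mathcal P(\bar w)$, namely the branch chosen at the degenerate indices. Conversely, the KKT point of each branch QP, which exists by Step 1 via a perturbation argument, is feasible for the QPCC. Its multipliers are S-stationary for the QPCC, provided the sign conditions on the degenerate indices hold. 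They do, because by perturbation they are close to $\bar\xi_i,\bar\nu_i\ge 0$, and any strictly positive entries stay positive.

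This sign issue is the main obstacle. Where $\bar\xi_i=0$ or $\bar\nu_i=0$, the branch-QP multiplier could become slightly negative. I would handle this by selecting, for each such index, the branch in which that constraint is treated as an inequality; its multiplier is then nonnegative by the QP's own KKT conditions. I would try to show that this choice always yields an S-stationary QPCC point, which gives existence of the sequence. The claim that every sequence converges uses only the converse direction.

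\emph{Step 3: contraction estimate.} For a fixed branch, view the QP step as an inexact Newton step for the generalized equation $0\in\Psi(z)+N_\I(z)$. Here the linearization of $\Psi$ at $z^k$ differs from the linearization at $\bar z$ by at most $\omega\|z^k-\bar z\|$, by~\eqref{eq:omega_cond_sqpcc}. Replacing the exact Hessian by $Q^k$ adds an error of at most $\kappa^k$, by~\eqref{eq:kappa_cond_sqpcc}. Applying the Lipschitz inverse from Step 1 to these perturbations yields
\[\|z^{k+1}-\bar z\|\le\gamma\kappa^k\|z^{k+1}-z^k\|+\tfrac{\gamma\omega}{2}\|z^k-\bar z\|^2+\dots\]
Using $\|z^{k+1}-z^k\|\le\|z^{k+1}-\bar z\|+\|z^k-\bar z\|$ and rearranging then gives~\eqref{eq:sqpcc_contraction_estimate}. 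The constants are $\alpha^k=\gamma\kappa^k/(1-\gamma\kappa^k)$ plus an $O(\varepsilon)$ term, and $\beta=O(\gamma\omega)$. The requirement $\bar\kappa<1/(3\gamma)$ ensures $\alpha^k<1$ with margin after absorbing the $O(\varepsilon)$ terms. Because the estimate holds uniformly over branches, switching branches between iterations does not matter.

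Finally, choose $\varepsilon$ small enough that the iterates stay in $\B_\varepsilon(\bar z)$, which gives linear convergence. If $\kappa^k\to0$, then $\alpha^k\to0$ and convergence is superlinear. If $\bar\kappa=0$, the first term vanishes and convergence is quadratic.
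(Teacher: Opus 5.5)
Your overall architecture is correct, and it matches the reduction the paper attributes to the cited proof. MPCC--LICQ and MPCC--SSOSC give strong regularity of every $\BNLP_\I$, $\I\in\mathcal P(\bar w)$, with a common $\gamma$. S-stationary QPCC points near $\bar z$ are KKT points of some branch QP. A $\kappa$--$\omega$ Newton estimate then applies uniformly over branches, so the claim that every such sequence converges goes through as you describe.

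The gap is the existence half of Step~2. The theorem asserts existence of the sequence, and you leave it at ``I would try to show''. Your rule of turning the zero-multiplier constraint into an inequality works only when exactly one of $\bar\xi_i,\bar\nu_i$ vanishes. The theorem assumes neither ULSC nor PULSC, so an index $i\in\I_{00}(\bar w)$ with $\bar\xi_i=\bar\nu_i=0$ is allowed. For such an index no branch keeps both $G_i\ge0$ and $H_i\ge0$ as inequalities. In either branch the equality component carries a free multiplier near $0$. The branch-QP solution can therefore have both linearized functions equal to zero and a negative multiplier, and then it is not S-stationary for the QPCC.

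To close this, use $Q^k\succ0$.
\begin{enumerate}
\item Each branch QP with $\I\in\mathcal P(\bar w)$ is strictly convex, so its unique minimizer is the KKT point near $\bar z$ that your perturbation argument provides.
\item Choose $\I^*$ with the smallest optimal value and let $\Delta w^*$ be its minimizer.
\item At each index where both linearized functions vanish at $\Delta w^*$, flipping that index gives a branch $\mathcal J\in\mathcal P(\bar w)$ for which $\Delta w^*$ is feasible. Minimality of $\I^*$ and uniqueness make $\Delta w^*$ the minimizer of that QP as well.
\item The active linearized gradients are a subset of the RNLP active gradients at $w^k$, which stay linearly independent for $w^k$ near $\bar w$. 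Hence both branches carry the same multiplier, which gives $\nu_i\ge0$ from one branch and $\xi_i\ge0$ from the other.
\end{enumerate}
So $\Delta w^*$ is S-stationary; equivalently, it is a local minimizer of the QPCC satisfying MPCC--LICQ.

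A smaller bookkeeping point concerns $\alpha^k$. Put the $\gamma\omega\|z^k-\bar z\|$ contribution into the denominator, $\alpha^k=\gamma\kappa^k/(1-\gamma\kappa^k-\gamma\omega\|z^k-\bar z\|)$, or into $\beta$. Do not add an $O(\varepsilon)$ term to $\alpha^k$, since then the claims $\alpha^k\to0$ for $\kappa^k\to0$ and $\alpha^k=0$ for $\bar\kappa=0$ no longer follow.
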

This theorem provides a local convergence estimate in~\eqref{eq:sqpcc_contraction_estimate} analogous to Newton/SQP local convergence estimates.
The MPCC-LICQ and MPCC-SSOSC are natural MPCC counterparts of the LICQ and SSOSC assumed in the local convergence of the SQP method~\cite{Deuflhard2011}.
The $\kappa$--$\omega$ conditions~\eqref{eq:kappa_omega_sqpcc} are commonly employed in the analysis of Newton-type methods~\cite{Deuflhard2011}, to which SQPCC also belongs~\cite{Scholtes2004,Nurkanovic2026}.
Condition~\eqref{eq:omega_cond_sqpcc} is a Lipschitz assumption on $\nabla \Phi(z)^\top$ and quantifies the nonlinearity of the problem data.
In turn, the compatibility condition~\eqref{eq:kappa_cond_sqpcc} bounds the error in the QPCC approximations, e.g., for the exact Hessian in the QPCC~\eqref{eq:qpcc} we have $\bar \kappa = 0$ (and quadratic convergence), whereas for a Gauss--Newton approximation usually $\kappa^k > 0$ (and linear convergence).
The smaller the approximation error $\kappa^k > 0$, the faster the linear convergence rate; cf.~\cite{Nurkanovic2026} for details.
Finally, we note that the SQPCC method does not require a global optimizer of the QPCC subproblems; S-stationarity is sufficient.
Under the given assumptions, this can be achieved by local QPCC solvers~\cite{Hall2024,Pozharskiy2026}, and keeping the iterates in $\B_{\varepsilon}(\bar z)$ can be achieved by warm-starting the QPCC solver.

\section{On parametric MPCCs}\label{sec:mpcc_sensitivity}
The main goal of this section is to characterize the continuity and differentiability properties of the solution map of a parametric MPCC in order to properly approximate it and quantify respective errors. 
For the sake of generality, we regard a version of the MPCC~\eqref{eq:mpcc_ocp} where all problem functions depend on a parameter $p \in \R^{n_p}$:
\vspace{-0.15cm}
\begin{mini!}[2]
	{\substack{w\in \R^{n}}}{f(w,p)\label{eq:par_mpcc_obj}}
	{\label{eq:par_mpcc}}{}
	\addConstraint{h(w,p)}{=0 \label{eq:par_mpcc_eq}}
	\addConstraint{g(w,p)}{\leq0 \label{eq:par_mpcc_ineq}}
	\addConstraint{0 \leq G(w,p) \perp H(w,p)}{\geq 0, \label{eq:par_mpcc_comp}}
\end{mini!}
We denote the branch NLPs corresponding to a feasible point $\bar{w}$ with a parameter $\bar{p}$ as $\mathrm{BNLP}_\I(\bar p)$, where $\I \in \mathcal{P}(\bar{w},\bar p)$.
The optimal value function of~\eqref{eq:par_mpcc} is denoted by $V(p)$. 

\subsection{Piecewise differentiability of MPCC solutions}
The parametric MPCC literature has mainly studied the (directional) differentiability of the optimal value function of~\eqref{eq:par_mpcc}~\cite{Hu2002,Guo2014,Izmailov2004}.  
For real-time MPC, however, a key object is the local solution map $z(p)$ itself, since online feedback updates must track a locally optimal primal--dual point as the parameter changes. 
Existing results on this question, e.g., under RNLP-SSOSC and ULSC~\cite[Theorem~11]{Scheel2000} prove $\mathrm{PC}^1$ of $z(p)$.
These assumptions exclude the complementarity active-set changes that are central in hybrid MPC, since they correspond to changes in the switching sequence.

Our results below extend this in the direction relevant for hybrid MPC, but under weaker assumptions.
Theorem~\ref{th:mpcc_sesitivity} establishes a local $\mathrm{PC}^1$ selection of B-stationary points under MPCC--LICQ and the weaker MPCC--SSOSC, and shows local uniqueness already under the weaker PULSC condition (Def.~\ref{def:pulsc}) introduced here. 
In addition, Theorem~\ref{th:mpcc_jump}  identifies when such a local selection must become discontinuous due to complementarity active-set changes. 
Together, these results provide the MPCC counterpart of Theorem~\ref{th:fiacco_nlp_sensitivity} while explicitly allowing complementarity active-set changes.

\begin{theorem}\label{th:mpcc_sesitivity}
	Let $\bar{w}$ be a B-stationary point of~\eqref{eq:par_mpcc} for a parameter $\bar{p}$.
	If MPCC--LICQ and MPCC--SSOSC hold for~\eqref{eq:par_mpcc} at $\bar{w}$ and $\bar{p}$, then:
	\begin{enumerate}[(a)]
		\item $\bar{w}$ is an isolated local optimal solution of~\eqref{eq:par_mpcc} for $p = \bar p$, with $(\bar \lambda,\bar \mu,\bar \nu,\bar \xi)$ being the unique MPCC multiplier vector associated with $\bar{w}$;
		\item there exist a neighborhood $\B_{\delta}(\bar{p})$ of $\bar{p}$,
		such that $w(p)$ is a B-stationary point of~\eqref{eq:par_mpcc}, and local minimizer, with associated unique multipliers $(\lambda(p), \mu(p), \nu(p), \xi(p))$ for $p \in \B_{\delta}(\bar{p})$; and the function $z(\cdot)$ is piecewise continuously differentiable ($\mathrm{PC}^1$) in $p \in\B_{\delta}(\bar{p})$;
		\item if in addition the MPCC--Lagrange multipliers satisfy the PULSC condition at $\bar{p}$ (cf. Def.~\ref{def:pulsc}), then the selection $z(p)$ is locally unique;
		\item the optimal value function $V(p)$ is piecewise smooth near $\bar{p}$, and strictly differentiable at $\bar p$, with $\nabla V(\bar p) = \nabla_p \mathcal{L}(\bar w,\bar \lambda,\bar \mu,\bar \nu, \bar \xi, \bar p)$;
		\item in any direction $v \neq 0$, the one-sided directional derivative $D_v z(p)$ exists at $\bar p$.
	\end{enumerate}
\end{theorem}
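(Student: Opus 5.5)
The strategy is to reduce everything to the branch NLPs and apply Theorem~\ref{th:fiacco_nlp_sensitivity} to each of them. At $(\bar w,\bar p)$ there are $N=2^c$ branches $\BNLP_\I(\bar p)$, $\I\in\mathcal P(\bar w,\bar p)$. First I will transfer the MPCC regularity to every branch. The active constraint gradients of $\BNLP_\I$ at $\bar w$ are a subset of those of the RNLP, so MPCC--LICQ gives LICQ for each $\BNLP_\I$. By B-stationarity and \cite[Prop.~4.3.5]{Luo1996}, the multiplier $\bar z$ is the same for all branches and is S-stationary. The strong critical cone of each branch is contained in $\mathcal C^{\mathrm S}_{\mathrm{MPCC}}(\bar w)$, and the MPCC--Lagrangian is the branch Lagrangian, so MPCC--SSOSC gives SSOSC for each branch. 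Applying Theorem~\ref{th:fiacco_nlp_sensitivity}(a) to every branch shows that $\bar w$ is an isolated local minimizer of each $\BNLP_\I$. Since the MPCC feasible set near $\bar w$ is the finite union of the branch feasible sets, $\bar w$ is an isolated local minimizer of~\eqref{eq:par_mpcc}. Uniqueness of the multiplier follows from LICQ of the RNLP. This proves (a).

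For (b) and (e), Theorem~\ref{th:fiacco_nlp_sensitivity}(b),(c),(e) and the $\mathrm{PC}^1$ remark after it give, for each $\I$, a Lipschitz $\mathrm{PC}^1$ map $z^\I(p)$ on a common ball $\B_\delta(\bar p)$, with SSOSC and LICQ for $\BNLP_\I(p)$. The step I expect to be the main obstacle is showing that some selection of these maps is B-stationary for the MPCC and is $\mathrm{PC}^1$ as a whole.

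\begin{itemize}
\item \emph{Nondegenerate indices.} Corollary~\ref{lem:active_set_stabilization} shows that nondegenerate indices in $\I_{0+}$, $\I_{+0}$ keep their status for $p$ near $\bar p$.
\item \emph{Degenerate indices.} For $i\in\I_{00}(\bar w)$, the solution $z^\I(p)$ satisfies S-stationarity for the MPCC exactly when the branch multipliers of the pairs that are biactive at $w^\I(p)$ are nonnegative. Nonnegativity on the chosen active side is automatic from the branch KKT conditions. So I must show that for each $p$ at least one branch $\I$ has $z^\I(p)$ S-stationary.
\item \emph{Existence of such a branch.} My plan is to take $\I$ to be a branch whose solution attains the minimal value $\min_\I f(w^\I(p),p)$. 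Its point is then a local minimizer of the MPCC, because near it the MPCC feasible set is covered by branches with nearby base points. By the branch local uniqueness, those branches have minimizers among the $z^{\I'}(p)$. Under the MPCC--LICQ that persists near $\bar w$, this local minimizer is B- and hence S-stationary.
\item \emph{Regularity of the selection.} The $\mathrm{PC}^1$ property of $z(p)$ then follows because it is a selection from the finitely many $\mathrm{PC}^1$ pieces of all $z^\I$. Any discontinuity between pieces is avoided on $\B_\delta$, since all $z^\I$ coincide at $\bar p$. Directional derivatives (e) come from the directional derivatives of the active branch, chosen consistently along the ray $\bar p+tv$ by finiteness and continuity.
\end{itemize}

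For (c), under PULSC every degenerate pair has, say, $\bar\xi_i>0$. By continuity $\xi_i^\I(p)>0$, which forces $G_i(w^\I(p),p)=0$ in any branch where $\xi_i$ appears with free sign. The branches with $i\in\I^{\mathrm C}$ therefore either coincide with those having $i\in\I$ or violate S-stationarity. This leaves a single admissible selection near $\bar z$, and local uniqueness of the MPCC solution follows from branchwise local uniqueness. For (d), $V(p)=\min_\I V^\I(p)$ is locally the minimum of finitely many branch value functions. Each $V^\I$ is differentiable at $\bar p$ with $\nabla V^\I(\bar p)=\nabla_p\mathcal L(\bar w,\bar\lambda,\bar\mu,\bar\nu,\bar\xi,\bar p)$ by Theorem~\ref{th:fiacco_nlp_sensitivity}(d), and this gradient is the same for all $\I$ because of the common multiplier. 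A minimum of finitely many functions sharing value and gradient at $\bar p$ is strictly differentiable there. Piecewise smoothness follows from the $\mathrm{PC}^1$ of the $z^\I$.
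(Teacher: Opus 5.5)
Your reduction to the branch NLPs, your self-contained proof of (a) (the paper simply cites Hu and Ralph), and the existence half of (b) are fine. The existence argument takes a different route from the paper. Instead of a case analysis on the signs of $\bar\xi_j,\bar\nu_j$, you pick a branch of minimal value, show it is a local minimizer of the MPCC, and get S-stationarity from the persistence of MPCC--LICQ. You should add that each $w^\I(p)$ minimizes $f(\cdot,p)$ over its branch feasible set within one neighborhood of $\bar w$ that does not depend on $p$; this needs a compactness argument, not just local uniqueness.

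\textbf{The genuine gap is the continuity step.} The claim that discontinuities are avoided on $\B_\delta(\bar p)$ because all $z^\I$ coincide at $\bar p$ is not valid. Coincidence at $\bar p$ says nothing about $p\neq\bar p$, where the minimal-value branch can switch between branches whose solutions differ. Your selection therefore need not be continuous, and hence need not be $\mathrm{PC}^1$. Your argument for (e) has the same flaw: the minimal-value branch can switch infinitely often along a ray $\bar p+tv$, e.g.\ when two branch values differ by an oscillating $C^2$ term.

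Without PULSC this cannot be repaired by a better selection rule once $n_p\ge 2$. Take $f(w,p)=(w_1-p_1)^2+(w_2-p_2)^2$ subject to $0\le w_1\perp w_2\ge 0$, with $\bar p=0$. All hypotheses hold at $\bar w=0$, with $\bar\xi_1=\bar\nu_1=0$. The B-stationary points are $(p_1,0)$ if $p_1>0$, $(0,p_2)$ if $p_2>0$, and the origin if $p_1\le 0$ and $p_2\le 0$. A continuous selection must equal $(p_1,0)$ near the positive $p_1$-axis and $(0,p_2)$ near the positive $p_2$-axis. These two candidates never coincide on the connected open quadrant $p_1,p_2>0$. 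Hence no continuous B-stationary selection exists on any ball around $\bar p$, and your rule jumps across the diagonal. So the $\mathrm{PC}^1$ claim of (b) itself needs an extra assumption such as PULSC.

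The paper's Case III does not avoid this either. At $p=(t,-t)$ with $t>0$, the branch $w_1=0$ yields the origin with $\xi_1=-2t<0$, which is not B-stationary, contrary to the claim that both branch solutions remain B-stationary.

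What does work, and what the paper's Cases I and II effectively do, is to fix a single branch $\I^*$ in advance: put $j\in\I^*$ if $\bar\xi_j>0$, and $j\in(\I^*)^{\mathrm C}$ if $\bar\xi_j=0<\bar\nu_j$. The free multiplier of each degenerate equality-side constraint then stays positive by continuity. So $w^{\I^*}(p)$ is B-stationary for all $p$ near $\bar p$, and $\mathrm{PC}^1$ and (e) follow from Theorem~\ref{th:fiacco_nlp_sensitivity} applied to this one branch. In other words, prove (c) first and deduce continuity from it.

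Finally, in (d), each $V^\I$ must be strictly differentiable at $\bar p$, not merely differentiable, for the minimum to be strictly differentiable. This holds because each $V^\I$ is $C^1$ near $\bar p$; you should say so.
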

\textit{Proof.}
Result (a) is 1) of \cite[Theorem 1]{Hu2002}. 
For part (b), the MPCC--LICQ and MPCC--SSOSC imply the usual LICQ and SSOSC at $\bar w$ for each parametric $\BNLP_{\I}(\bar p)$, and $\I \in \mathcal P(\bar w,\bar p)$. 
It follows from Theorem~\ref{th:fiacco_nlp_sensitivity}~(a) that $\bar w$ is an isolated locally optimal solution of each $\BNLP_\I(\bar p)$. 
Furthermore, it follows from applying Theorem~\ref{th:fiacco_nlp_sensitivity}~(b) to each $\BNLP_\I(p)$ for $\I \in \mathcal P(\bar w,\bar p)$, that there exists a neighborhood $\B_{\delta^{\I}}(\bar p)$ of $\bar p$ such that the optimization problem ${\BNLP}_{\I}(p)$ has a locally unique optimal solution $z^{\I}(p)$ which is $\mathrm{PC}^1$ for $p \in \B_{\delta^{\I}}(\bar p)$.

Next, we restrict the set of valid parameters $p$ such that certain complementarity active sets remain unchanged.
By applying Corollary~\ref{lem:active_set_stabilization} to each $\BNLP_{\I}(\bar p)$, it follows that there exists a constant $\rho^{\I} > 0$ such that, for all $p \in \B_{\delta^{\I}}(\bar p) \cap \mathcal B_{\rho^{\I}}(\bar p)$, the following assertion holds: all inactive constraints $G_i(w) > 0$, $i \in \I_{+0}(\bar w) \cap \I^{\mathrm C}$, and $H_i(w) > 0$, $i \in \I_{0+}(\bar w) \cap \I$, in $\BNLP_{\I}(\bar p)$ remain inactive for $w^{\I}(p)$ and all $p \in \B_{\rho^{\I}}(\bar p)$.
Now for $p \in \B_{\rho}(\bar p)$, with $\rho = \min_{\I \in \mathcal{P}(\bar w, \bar p)} \rho^\I$, all nondegenerate indices $i \in \{1,\ldots,m\} \setminus \I_{00}(\bar w)$ remain nondegenerate for all $w^\I(p)$, i.e., $\I_{0+}(\bar w) = \I_{0+}(w^\I(p))$ and 
$\I_{+0}(\bar w) = \I_{+0}(w^\I(p))$.

Next, we discuss the degenerate pairs $j\in \I_{00}(\bar w)$, which in the $\BNLP_\I$ yield (weakly) active inequality constraints. 
For all $i \in \I_{00}(\bar w)$ with additionally $\bar\xi_i >0 $ and $\bar\nu _i >0$, it means that in each constraint $G_i(w) \geq 0, i \in \I^\mathrm{C} \cap \I_{00}(\bar{w})$ and $H_i(w) \geq 0, i \in \I \cap \I_{00}(\bar{w})$ is strictly active with $G_i(\bar{w}) = 0$ and $H_i(\bar w) = 0$, and it follows from Corollary~\ref{lem:active_set_stabilization} that they remain strictly active for $w^{\I}(p)$ and $p\in \B_{\rho^{\I}}(\bar p)$. 

It remains to discuss degenerate pairs $j \in \I_{00}(\bar{w})$ that satisfy $G_j(\bar w) =  H_j(\bar w) = 0$, with $\bar \xi_j = 0$ and/or $\bar \nu_j = 0$. 
These constraints result in weakly active constraints in the BNLPs, and stay either active or become inactive by making $j$ now a nondegenerate pair.

Let $\I_a$ and $\I_b$ denote two branches differing only at index $j$.
Then in $\BNLP_{\I_a}(\bar{p})$ we have:
\begin{align}\label{eq:branch_parametric_a}
	G_j(w) = 0, \ H_j(w) \geq 0,  j \in  \I_{a},
\end{align}
and in $\BNLP_{\I_b}(\bar{p})$: 
\begin{align}\label{eq:branch_parametric_b}
	G_j(w) \geq 0, \ H_j(w) = 0,  j \in  \I_{b}^\mathrm{C},
\end{align}
We discuss now three distinct possible cases.

\textbf{Case I:} $\bar \xi_j = 0$ and $\bar \nu_j > 0$. 
Consider the equality constraint in $\BNLP_{\I_a}(\bar p)$ given by~\eqref{eq:branch_parametric_a}, 
for which at $p = \bar p$, it holds that $G_j(w^{\I_a}(\bar p)) = 0$ and $\xi_j^{\I_a}(\bar p) = 0$.
In any case $\nu_j^{\I_a}(p)\geq 0$, since it is a multiplier of an inequality constraint in~\eqref{eq:branch_parametric_a}.
For $p \neq \bar p$ there are two possible outcomes.

First, for $p \in \B_{\delta^{\I_a}}(\bar p) \cap \B_{\delta^{\I_b}}(\bar p)$, it may occur that $\xi_j^{\I_a}(p) \geq 0$ (possibly after shrinking $\delta^{\I_a}$ and $\delta^{\I_b}$ if necessary).
In this case, it follows that $G_j(w^{\I_a}(p)) = 0$ for $j \in \I_a$ and $G_j(w^{\I_b}(p)) = 0$ for $j \in \I_b^{\mathrm C}$, and the index $j$ remains degenerate, and the BNLP solutions $w^{\I_a}(p) = w^{\I_b}(p)$ are a B-stationary point. 

Second, suppose that $G_j(w^{\I_a}(\bar p)) = 0$ but $\xi_j^{\I_a}(p) < 0$ for $p \neq \bar p$.
Then $w^{\I_a}(p)$ is no longer a B-stationary point of the MPCC as it violated the conditions of S-stationarity.
Consider instead the branch defined in~\eqref{eq:branch_parametric_b}, where $G_j(w^{\I_a}(\bar p)) = 0$ is now a weakly active constraint, because we assumed for Case I that $\bar \xi_j = 0$.
Suppose that $w^{\I_a}(p) = w^{\I_b}(p)$ is a KKT point of $\BNLP_{\I_b}(p)$ with $G_j(w^{\I_b}(p)) = 0$ for $p \in \B_{\delta^{\I_a}}(\bar p) \cap \B_{{\delta^{\I_b}}}(\bar p)$. 
The two problems have the same active set and because of MPCC-LICQ they must have the same multipliers, cf. \cite[Proposition 4.3.5]{Luo1996}.
But this is impossible, because this would require $\xi_j^{\I_b}(p) \geq 0$, which contradicts the assumption of the case, hence $w^{\I_b}(p)$ is not a KKT point of ${\BNLP}_{\I_a}(p)$.
We conclude that the solution $w^{\I_b}(p)$ must satisfy $G_j(w^{\I_b}\!(p))\!>\!0$.

Thus, on the one hand, $w^{\I_a}(p)$ is not anymore a B-stationary point of the MPCC$(p)$.
On the other hand, in $\BNLP_{\I_b}$ the degenerate index $j\in \I_{00}(\bar w)$ becomes nondegenerate with $j \in \I_{+0}(w(p))$, and for all other $p \in \B_{\min(\rho,\delta^{\I_b})}(\bar p)$, all remaining degenerate indices satisfy the conditions of S-stationarity (and hence B-stationarity).
We have proven that there exists a $\delta \leq \min(\rho,\delta^{\I_b})$ such that, for all $p \in \B_{\delta}(\bar p)$, the point $w^{\I_b}(p)$ is a B-stationary point of the MPCC with active sets $\I_{00}(w^{\I_b}(p)) = \I_{00}(\bar w) \setminus \{j\},\ \I_{+0}(w^{\I_b}(p)) = \I_{+0}(\bar w) \cup \{j\}$ and $\I_{0+}(w^{\I_b}(p)) = \I_{0+}(\bar w)$.

\textbf{Case II:} $\bar \xi_j > 0$ and $\bar \nu_j = 0$.
This case is symmetric to Case I, where $w^{\I_b}(p)$ is not anymore a B-stationary if $j$ becomes nondegenerate, and
$w^{\I_a}(p)$ remains B-stationary with active sets $\I_{00}(w^{\I_a}(p)) = \I_{00}(\bar w) \setminus \{j\}$, $\I_{0+}(w^{\I_a}(p)) = \I_{0+}(\bar w) \cup \{j\}$,
and $\I_{+0}(w^{\I_a}(p)) = \I_{+0}(\bar w)$.

\textbf{Case III:} $\bar \xi_j = 0$ and $\bar \nu_j = 0$. 
If $ \xi_j(p) \geq 0$ and $ \nu_j(p) \geq 0$ then both $w^{\I_a}(p) = w^{\I_b}(p)$ remain B-stationary with $j$ remaining a degenerate index.
Otherwise, by repeating arguments of Cases I and II, we have that $w^{\I_a}(p) \neq w^{\I_b}(p)$ but both remain B-stationary, and we have a branching of the solution.
Yet, both selections, $w^{\I_a}(p)$ and $w^{\I_b}(p)$, satisfy the conditions of Theorem~\ref{th:fiacco_nlp_sensitivity}.

Repeating this argument and intersecting the corresponding $\delta$ neighborhoods of $\bar p$, multiple indices $j$ may leave $\I_{00}$ and enter either $\I_{0+}$ or $\I_{+0}$, and we can construct, not a necessarily unique, $w^\I(p)$ which remains a B-stationary point.
This concludes the proof of the statement (b).

Next, we prove result (c). 
Under the PULSC assumptions, Case III cannot occur and there is no branching of the solution.
Therefore the selection $z(p)$ is locally unique for $p \in \B_{\delta}(\bar p)$.

Result (d) is 2) and 3) of \cite[Theorem 1]{Hu2002}. 
Result~(e) follows from applying Theorem~\ref{th:fiacco_nlp_sensitivity}(e) to the $\BNLP_{\I}(p)$, whose KKT point is also the B-stationary point of the MPCC.
\qed

In Theorem~\ref{th:mpcc_sesitivity}, the local uniqueness of $w(\hat p)$ for any fixed parameter $\hat p$ in result~(a) should not be confused with the local branching of the solution map $z(p)$ described in case~(b).
For any fixed branch $z^j(p)$ and any fixed parameter $\hat p$, the conclusion of~(a) remains valid for $w^j(\hat p)$.
Moreover, if follows from the proof, that if the PULSC condition does not hold, then the number of possible branches of $z(p)$ is equal $2^d$, where $d$ is the number of degenerate multiplier pairs $\bar \xi_i = \bar \nu_i = 0$.
\begin{example}[Nonunique selection]
	Note that in Theorem~\ref{th:mpcc_sesitivity}~(b), despite the MPCC--SSOSC holding, if there exists an index $i \in \I_{00}(\bar w)$ with $\bar\xi_i = \bar\nu_i = 0$, then continuous local selections exist but they are not locally unique at the degenerate point $\bar w$.
	This phenomenon was already observed in~\cite[Example~10]{Scheel2000}:
		\[
		\min_{w\in \R^2} \ (w_1 - p)^2 + (w_2 - p)^2 \ \mathrm{s.t.}\ 0 \leq w_1 \perp w_2 \geq 0,
		\]
	which has the unique solution $w(p)=(0,0)$ for $p < 0$, but branches into the two solutions $(0,p)$ and $(p,0)$ for $p \geq 0$.
\end{example}
Locally unique selections can be ensured under the ULSC condition and RNLP-SSOSC, cf. \cite[Theorem 11]{Scheel2000}. 
However, this precludes the possibility of active set changes where degenerate indices become nondegenerate.  
Theorem~\ref{th:mpcc_sesitivity}(c) improved this result by showing that branching of the solution map, despite such active-set changes, does not occur if the PULSC condition holds.  
This is illustrated in the next example.

\begin{example}[Unique selection]\label{ex:mpcc_pulsc}
	Consider the MPCC
	\begin{align*}
		\min_{w\in \R^2} \ (w_1 - p + 1)^2 + (w_2 + p + 1)^2 
		\,\, \mathrm{s.t.} \
		0 \le w_1 \perp w_2 \ge 0,
	\end{align*}
	for $p \in [-2,2]$.  
	We consider two BNLPs, denoted by $\BNLP_{\I_a}$ with $w_1 = 0,\, w_2 \geq 0$ and 
	$\BNLP_{\I_b}$ with $w_1 \geq 0,\, w_2 = 0$. 
	The corresponding solutions of these BNLPs are shown in the left and middle plots of Fig.~\ref{fig:mpcc_pulsc}.
	The right plot illustrates the locally unique MPCC selection. 
	The solution map satisfies $\I_{00}(w(p)) = \{1\}$ for $p \in [-1,1]$, and for $p > 1$ we have $\I_{00}(w(p)) = \emptyset$.  
	Since $\nu(p) > 0$ and $\xi(p) = 0$ at $p = 1$, the PULSC condition holds, and no branching occurs.
\end{example}

The previous example also exhibits an active-set change at $p = -1$, where a nondegenerate index becomes degenerate.  
This case is not covered by Theorem~\ref{th:mpcc_sesitivity}.  
Nevertheless, the degenerate pair remains S-stationary because the multipliers $\xi(p)$ and $\nu(p)$ are nonnegative at $p = -1$.  
This does not hold in general.  
In such cases, the solution may exhibit a jump discontinuity, as illustrated in the next example.

\begin{example}[Discontinuous solution map]\label{ex:mpcc_jump}
	Consider the parametric MPCC for $p \in [-2,2]$:
	\begin{align*}
		\min_{w\in \R^2} \ (w_1 - p - 1)^2 + (w_2 + p - 1)^2 
		\,\, \mathrm{s.t.} \
		0 \le w_1 \perp w_2 \ge 0.
	\end{align*}
	Consider two branch problems: $\BNLP_{\I_a}$ with $w_1 = 0,\, w_2 \geq 0$ and $\BNLP_{\I_b}$ with $w_1 \geq 0,\, w_2 = 0$. 
	The corresponding solutions are shown in the left and middle plots of Fig.~\ref{fig:mpcc_jump}.  
	Consider the solution branch of $\BNLP_{\I_a}$: for $p \in [-2,1)$ it is S-stationary for the MPCC.  
	At $p = 1$, the index $i = 1$ becomes degenerate and the multipliers satisfy $\xi(p) < 0$ and $\nu(p) = 0$.  
	Hence, the S-stationarity conditions do not hold, and the solution must jump to a different branch (see the right plot of Fig.~\ref{fig:mpcc_jump}). 
	In contrast to the previous example, no continuous selection exists.  
	For any $p \in (-1,1)$ there exist two isolated selections with nondegenerate indices.
\end{example}
\begin{figure}[t]
	\centering
	\includegraphics[width=0.49\textwidth]{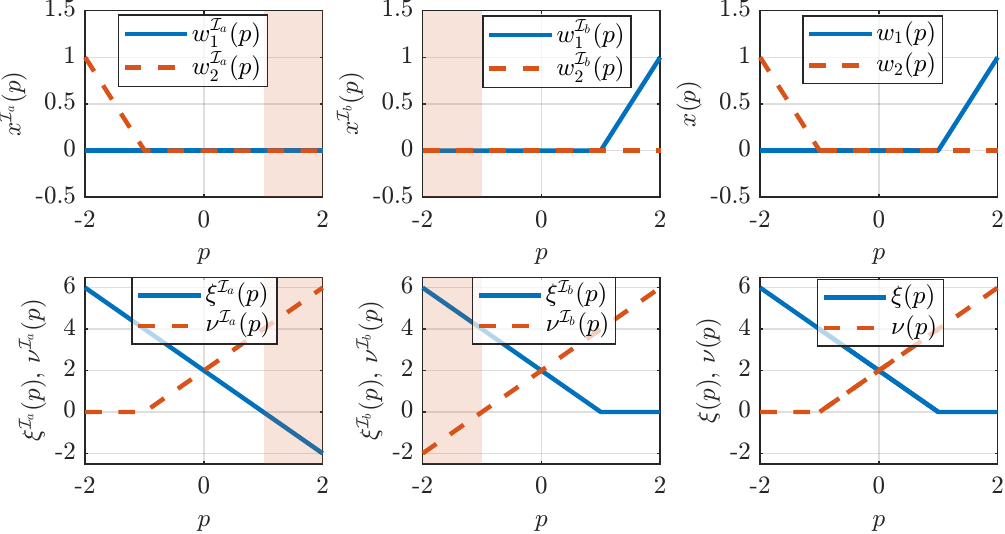}
	\vspace{-0.5cm}
	\caption{
		The left and middle plots show the solution maps of $\BNLP_{\I_a}$ and $\BNLP_{\I_b}$ from Example~\ref{ex:mpcc_pulsc}, respectively.
		The red shaded area indicates where a BNLP solution is not S-stationary.
		The right plot shows the locally unique solution mapping of the MPCC.
	}
	\vspace{-0.4cm}
	\label{fig:mpcc_pulsc}
\end{figure}

So far, Theorem~\ref{th:mpcc_sesitivity} covers active-set changes of the inequality constraints and allows degenerate complementarity pairs $G_i(\bar w) = H_i(\bar w) = 0$ to become nondegenerate for $w(p)$, that is, either $G_i(w(p)) > 0$ or $H_i(w(p)) > 0$.
However, it does not cover the case of the previous two examples at $p = -1$, in which a nondegenerate pair becomes degenerate.
This case is covered by the next theorem.

\begin{theorem}[Solution discontinuity due to active-set changes]\label{th:mpcc_jump}
	Let $\bar w$ be a B-stationary point of~\eqref{eq:par_mpcc} for a parameter $\bar p$.
	Assume MPCC--LICQ and MPCC--SSOSC hold at $\bar w$ and $\bar p$.
	Given a direction $v \in \R^{n_p}$, regard the line segment
	\(
	\mathcal S(\bar p, \bar p + v) := \{ \bar p + \tau v : \tau\in[0,1]\}
	\)
	and let $z^\I(p)$ be a solution to a $\BNLP_\I$ such that $w^\I(\bar p)$ is B-stationary.
	Suppose that there exists an index $i\in \I_{0+}(\bar w)$ and a constant $0<\tau_s< 1$ such that
	$
	i\in \I_{00}(w^{\I}(p(\tau))),\, \text{for all } \tau \in[\tau_s,1],
	$
	provided $\| v\|$ is sufficiently small.
	Then exactly one of the following cases occurs:
	\begin{enumerate}[(a)]
		\item If $\xi_i^{\I}(p(\tau_s))\geq 0,\, i\in \I_{0+}(\bar w)$
		for all $\tau\in[\tau_s,1]$, then for $z(p) := z^{\I}(p)$ remains $\mathrm{PC}^1$, and $w^\I(p)$ is B-stationary for $p \in \mathcal S(\bar p, \bar p + v)$.
		
		\item If $\xi_i^{\I}(p(\tau_s))<0,\, i \in \I_{0+}(\bar w)$,
		then $w^{\I}(p(\tau))$ is not B-stationary for any $\tau \in[\tau_s,1]$.
		Moreover, if there exists a B-stationary point $\hat x$ at $p_s := p(\tau_s)$ satisfying MPCC--SSOSC and MPCC--LICQ, then there exists a solution mapping $z(p)$ that is discontinuous at $p_s$, and  $w(p)$ is B-stationary for all $p \in \mathcal S(\bar p, \bar p + v)$.
	\end{enumerate}
	The symmetric case $i \in \I_{+0}(\bar w)$ with $\nu_i$ in place of $\xi_i$ is treated analogously.
\end{theorem}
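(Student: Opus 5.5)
We work along the segment $p(\tau)=\bar p+\tau v$ and reduce everything to the fixed branch problem $\BNLP_\I(p)$. By Theorem~\ref{th:fiacco_nlp_sensitivity}, applied to $\BNLP_\I(\bar p)$, we obtain a unique $\mathrm{PC}^1$ KKT map $z^\I(p)=(w^\I,\lambda^\I,\mu^\I,\xi^\I,\nu^\I)(p)$ on $\B_\delta(\bar p)$. This uses the fact that MPCC--LICQ and MPCC--SSOSC at $\bar w$ imply LICQ and SSOSC for each BNLP, as in the proof of Theorem~\ref{th:mpcc_sesitivity}. Taking $\|v\|<\delta$ keeps the whole segment inside this ball. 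By Corollary~\ref{lem:active_set_stabilization}, all nondegenerate indices other than $i$ keep their status along the segment, possibly after shrinking $\|v\|$. Hence the only MPCC-relevant change on $[\tau_s,1]$ is that $i$ enters $\I_{00}$. Since $i\in\I_{0+}(\bar w)$, we have $i\in\I$, so in $\BNLP_\I$ the constraint $G_i=0$ is an equality with free multiplier $\xi_i^\I$, while $H_i\ge 0$ becomes active at $\tau_s$.

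\textbf{Case (a).} We verify B-stationarity via Definition~\ref{def:b_stat_bnlp} and the equivalence with S-stationarity under MPCC--LICQ, using \cite[Proposition 4.3.5]{Luo1996}. The KKT conditions of $\BNLP_\I(p)$ already give all S-stationarity conditions except the sign conditions on the newly degenerate index. For these, $\nu_i^\I\ge 0$ holds automatically because $\nu_i^\I$ is the multiplier of an inequality in $\BNLP_\I$. The condition $\xi_i^\I\ge0$ is exactly the hypothesis. Hence $w^\I(p)$ is S-stationary, and therefore B-stationary, for $\tau\in[\tau_s,1]$. For $\tau<\tau_s$ the index is nondegenerate and S-stationarity reduces to the BNLP KKT conditions. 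The $\mathrm{PC}^1$ property is inherited from $z^\I$.

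\textbf{Case (b).} We first show non-B-stationarity. At a point where $i\in\I_{00}$, S-stationarity requires $\xi_i\ge0$, and MPCC--LICQ at $w^\I(p(\tau))$ makes the multiplier unique and equal to $\xi_i^\I$. A negative value therefore rules out S-stationarity, and under MPCC--LICQ also B-stationarity. Equivalently, $w^\I$ fails to be stationary for the complementary branch $\I\setminus\{i\}$, in which the sign of the multiplier of $G_i\ge0$ is enforced. We need $\xi_i^\I<0$ on all of $[\tau_s,1]$, not only at $\tau_s$. I would obtain this by continuity of $\xi^\I$ after shrinking $\|v\|$, reading the hypothesis as holding along the segment in the same way as in (a).

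Next, we construct the discontinuous map. At $p_s$ we apply Theorem~\ref{th:mpcc_sesitivity}(b) to the assumed B-stationary point $\hat x$, which satisfies MPCC--LICQ and MPCC--SSOSC. This yields a $\mathrm{PC}^1$ B-stationary selection $\hat z(p)$ near $p_s$, which we continue along $[\tau_s,1]$, restarting Theorem~\ref{th:mpcc_sesitivity} as needed. We then define $z(p)=z^\I(p)$ for $\tau<\tau_s$ and $z(p)=\hat z(p)$ for $\tau\ge\tau_s$.

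For discontinuity, note that the only B-stationary candidate near $w^\I(p_s)$ is $w^\I(p_s)$ itself. By local uniqueness of BNLP solutions, any B-stationary point close to it would be the KKT point of $\BNLP_\I$ or of $\BNLP_{\I\setminus\{i\}}$. The first is excluded by the sign argument above, and the second is excluded because the sign condition forces $G_i>0$, as in Case I of Theorem~\ref{th:mpcc_sesitivity}. Thus $\hat x\ne w^\I(p_s)$, and since $w^\I(p)\to w^\I(p_s)$ as $\tau\uparrow\tau_s$, the map $z$ jumps at $p_s$.

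The main obstacle is making this local-exclusion argument rigorous, i.e.\ showing that $\hat x$ is isolated away from $w^\I(p_s)$ uniformly, and ensuring that the continued selection $\hat z$ exists on the whole remaining segment without leaving the region where MPCC--LICQ and MPCC--SSOSC hold. I would handle the latter by a compactness covering of $[\tau_s,1]$ by neighborhoods from Theorem~\ref{th:mpcc_sesitivity}. The symmetric case follows by exchanging $G$ and $H$ and replacing $\xi$ by $\nu$.
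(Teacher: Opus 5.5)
Your proposal follows the paper's proof essentially step for step: Theorem~\ref{th:fiacco_nlp_sensitivity} and Corollary~\ref{lem:active_set_stabilization} for the fixed $\BNLP_\I$, the automatic sign $\nu_i^\I\ge 0$ together with the sign of $\xi_i^\I$ (extended to $[\tau_s,1]$ by continuity after shrinking $\|v\|$) to decide S-/B-stationarity, and gluing $z^\I$ on $[0,\tau_s)$ to the selection that Theorem~\ref{th:mpcc_sesitivity} provides at $p_s$; the paper likewise just asserts that this selection exists on all of $[\tau_s,1]$. The local-exclusion argument you call the main obstacle is unnecessary, since discontinuity is immediate once the left limit $w^\I(p_s)$ is known not to be B-stationary while $\hat x$ is (the paper phrases this as differing nondegenerate index sets), and as written its second half does not work: $G_i>0$ in the branch $\I\setminus\{i\}$ is exactly how Case~I of Theorem~\ref{th:mpcc_sesitivity} produces a valid B-stationary continuation, so ruling out a nearby KKT point of that branch would instead need a limiting argument on its multipliers under LICQ at $w^\I(p_s)$, which yields $\xi_i\ge 0$ and contradicts $\xi_i^\I(p_s)<0$.
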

\textit{Proof.}
It was already shown in Theorem~\ref{th:mpcc_sesitivity} that, for all other active-set changes, the solution mapping $z(p)$ is $\mathrm{PC}^1$ and $w(p)$ is B-stationary for all $p \in \mathcal B_{\delta}(\bar p)$.
By Corollary~\ref{lem:active_set_stabilization}, shrinking $\delta > 0$ if necessary. W.l.o.g. we assume that all other nondegenerate indices except $i$ remain nondegenerate.

The multiplier $\nu_i(p)$ satisfies $\nu_i(p) \geq 0$, since it corresponds to an inequality constraint in all $\BNLP_\I$.
In case~(a), we have $G_i(w^{\I}(p(\tau_s))) = 0$, $H_i(w^{\I}(p(\tau_s))) = 0$, and $\xi_i(p(\tau)) \geq 0$, with $p(\tau) = \bar p + \tau v$.
Hence, the only additional active-set change is that the index $i$ becomes degenerate, which does not violate the conditions of S-stationarity.

The problem $\BNLP_\I$ satisfies the conditions of Theorem~\ref{th:fiacco_nlp_sensitivity}, and by assumption $p(\tau) \in \mathcal B_{\delta^{\I}}(\bar p)$ for all $\tau \in [\tau_s, 1]$.
Therefore, $w^{\I}(p(\tau))$ remains B-stationary and $z^{\I}(p(\tau))$ is $\mathrm{PC}^1$ for all $\tau \in [0, 1]$, which concludes part~(a).

On the other hand, in case~(b), if $\xi_i^{\I}(p(\tau_s)) < 0$, then by continuity this inequality persists for $p(\tau) = \bar p + \tau v$ for all $\tau \in [\tau_s,1]$ provided $\| v \|$ is sufficiently small.
Consequently, the condition of S-stationarity is violated.
This holds for all $\I \in \mathcal P(\bar w, \bar p)$, since nondegenerate index pairs are identical for all BNLPs.
Since we have assumed the existence of a B-stationary point $\hat w := \hat w(p_s)$ satisfying the conditions of Theorem~\ref{th:mpcc_sesitivity},
we define the function:
\begin{align*}
	z(p(\tau)) :=
	\begin{cases}
		z^{\I}(p(\tau)), & \tau \in [0, \tau_s),\\
		\hat z(p(\tau)), & \tau \in [\tau_s, 1],
	\end{cases}
\end{align*}
such that its primal component $w(\tau)$ is B-stationary for all $\tau\in [0,1]$.

Moreover, since $w^{\I}(p(\tau_s))$ is not S-stationary, it necessarily follows that $\I_{+0}(\bar w) \neq \I_{+0}(\hat w)$ and $\I_{0+}(\bar w) \neq \I_{0+}(\hat w)$.
That is, the nondegenerate index sets do not coincide, which implies that $z(p(\tau))$ is discontinuous at $\tau = \tau_s$.
\qed 

\begin{figure}[t]
	\centering
	\includegraphics[width=0.49\textwidth]{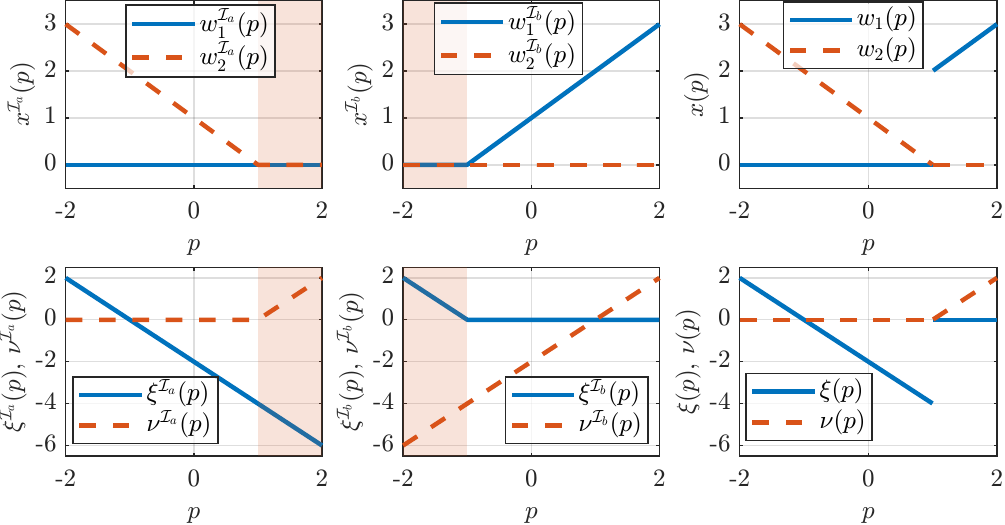}
	\vspace{-0.5cm}
	\caption{
		The left and middle plots show the solution maps of $\BNLP_{\I_a}$ and $\BNLP_{\I_b}$ from Example~\ref{ex:mpcc_jump}, respectively.
		The red shaded area indicates where a BNLP solution is not S-stationary.
		The right plot shows the discontinuous solution mapping of the MPCC.
		For $p \in (-1,1)$ the MPCC has two isolated local minimizers, and the right plot depicts one possible selection, which tracks $\I_a$, as long as possible.
	}
	\vspace{-0.4cm}
	\label{fig:mpcc_jump}
\end{figure}
Jumps and kinks in the solution map $z(p)$ occur whenever a switching event takes place, that is, whenever a complementarity active-set change in the DCS happen~\cite{Nurkanovic2023f}.

We show in Sec.~\ref{sec:path_follow_example}, that QP predictors used in classical MPC are not sufficient to capture such branch changes in the MPCC.
The QP may become infeasible or track a non-optimal branch.
By contrast, a QPCC preserves the complementarity structure under linearization and can therefore capture changes of branch NLPs, i.e., complementarity active-set changes and the associated switching sequence.
Hence, our QPCC-based hybrid real-time MPC algorithms are able to approximate kinks and jumps.

\subsection{Computing directional derivatives via QPCCs}\label{sec:diff_mpcc}
Theorem~\ref{th:mpcc_sesitivity} establishes conditions under which the solution map $z(p)$ is continuous and the directional derivatives $D_v z(\bar p)$ exist in all directions $v$.  
Similarly, in Theorem~\ref{th:mpcc_jump}~(a), directional derivatives exist in all directions since continuity of the solution map is preserved, while in Theorem~\ref{th:mpcc_jump}~(b) they exist in all directions $v$ that do not lead to a jump discontinuity.
Beyond existence, it is also useful to compute these derivatives. 
They are used in \textit{differentiable optimization}, e.g., in optimization layers in neural networks and policy optimization~\cite{Zuliani2025}.  

In analogy to NLP-SQP case~\cite[Theorem 3.6]{Diehl2001}, we show how to compute the directional derivatives of the MPCC solution map via suitable SQPCC step.
For the sake of brevity, we consider the linear parameter case as in~\eqref{eq:mpcc_ocp}, and omit the derivation of the QPCC for the generic MPCC~\eqref{eq:par_mpcc}, which can be developed analogously to, e.g.,~\cite[Eq.~(14)]{Jittorntrum1984} and~\cite[Theorem~1]{Ralph1995}, by exploiting the piecewise structure of MPCCs.

\begin{proposition}\label{prop:directional_derivaies}
	Let $\bar w$ be a B-stationary point of~\eqref{eq:mpcc_ocp} for the base parameter $x = \bar x + \tau v$, with $\tau = 0$.  
	Suppose the assumptions of Theorem~\ref{th:mpcc_sesitivity} or Theorem~\ref{th:mpcc_jump}~(a) hold.  
	Consider the QPCC~\eqref{eq:qpcc} with the exact Hessian, applied to~\eqref{eq:mpcc_ocp} at the linearization point $\bar z$, and the parameter $x = \bar x + \tau v$, and with a sufficiently small $\tau > 0$.  
	Then there exists an S-stationary point $\Delta w$ of this QPCC, with multipliers $(\lambda,\mu,\xi,\nu)$, from which we define:
	\[
	\Delta z := (\Delta w,\lambda-\bar\lambda,\mu-\bar\mu,\xi-\bar\xi,\nu-\bar\nu),
	\]
	and it holds that:
	\begin{align*}
		D_v z(\bar x) = \frac{1}{\tau}\,\Delta z
		= \lim_{\tau\to 0,\, \tau>0} \frac{z(\bar x + \tau v) - z(\bar x)}{\tau}.
	\end{align*}
\end{proposition}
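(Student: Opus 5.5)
The plan is to reduce the statement to the classical NLP result \cite[Theorem~3.6]{Diehl2001} applied to a single, suitably chosen branch NLP, using the piecewise structure established in Theorem~\ref{th:mpcc_sesitivity} (or Theorem~\ref{th:mpcc_jump}~(a)).

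\textbf{Step 1: fix the branch.} By Theorem~\ref{th:mpcc_sesitivity}(b),(e) (resp.\ Theorem~\ref{th:mpcc_jump}(a)), for $\tau>0$ small the selection $z(\bar x+\tau v)$ coincides with the KKT point $z^{\I}(\bar x+\tau v)$ of one branch $\BNLP_{\I}$ with $\I\in\mathcal P(\bar w)$. The proofs of those theorems show that this branch, and the active set of the inequality and complementarity constraints along the ray, can be held fixed for all small $\tau>0$. This uses Corollary~\ref{lem:active_set_stabilization} together with the case analysis (Cases I--III) on degenerate indices. Under MPCC--LICQ and MPCC--SSOSC, this branch satisfies LICQ and SSOSC at $\bar w$, and its multipliers equal $\bar z$'s multipliers by \cite[Proposition 4.3.5]{Luo1996}. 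Because the parameter enters linearly through $Mx$, \cite[Theorem~3.6]{Diehl2001} then applies to $\BNLP_{\I}$. The exact-Hessian QP of $\BNLP_{\I}$ at $\bar z$ with parameter $\bar x+\tau v$ has a solution $\Delta z^{\I}(\tau)$ that is positively homogeneous in $\tau$ for small $\tau$. By Theorem~\ref{th:fiacco_nlp_sensitivity}(e) it satisfies $\Delta z^{\I}(\tau)/\tau = D_v z^{\I}(\bar x) = D_v z(\bar x)$.

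\textbf{Step 2: this QP point is an S-stationary point of the QPCC.} The Hessian of the QP is $\nabla^2_{ww}\mathcal L$, which is branch-independent. The QP is the branch QP of the QPCC~\eqref{eq:qpcc} linearized at $\bar w$ for the partition $\I$. This partition is admissible for the linearized complementarity because $G^k,H^k$ vanish exactly on the degenerate pairs. It remains to verify the S-stationarity sign conditions for the QPCC at $\Delta w$. For nondegenerate pairs of the linearized problem this is automatic. For a pair that is degenerate in the QPCC solution, we need $\xi_i\ge0$ and $\nu_i\ge0$. These are exactly the sign conditions satisfied by $z^{\I}(\bar x+\tau v)$ on the chosen branch, because the QP solution reproduces the first-order expansion, including the active set along the ray. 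The multipliers of the QP are absolute multipliers, $\bar\lambda+\Delta\lambda$ and so on, which is why $\Delta z$ is formed by subtracting $\bar z$. Locally, with $\tau$ small, the QPCC's degenerate set is a subset of $\I_{00}(\bar w)$, and its multipliers are those of the branch QP.

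\textbf{Step 3: conclude.} Combining Steps 1 and 2 gives $\Delta z/\tau = D_v z(\bar x)$. The limit representation of $D_v z(\bar x)$ is just the definition of the one-sided derivative, whose existence is given by Theorem~\ref{th:mpcc_sesitivity}(e).

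\textbf{Main obstacle.} The delicate point is Step 2: showing that the sign conditions hold for the linearized degenerate pairs in exactly the way they hold for the nonlinear branch. In particular, when a weakly active multiplier such as $\bar\xi_j=0$ would turn negative (Case I), the selection switches branch, and the QP of the correct branch must be the one that is S-stationary for the QPCC. I would handle this by applying the case analysis of Theorem~\ref{th:mpcc_sesitivity} verbatim to the QPCC itself, viewed as a parametric MPCC in $\tau$ with the same multipliers at $\tau=0$. That QPCC satisfies MPCC--LICQ and MPCC--SSOSC at $\Delta w=0$ with the same multiplier signs, so the same branch is selected. Linearity in $\tau$ then makes the QPCC solution exactly $\tau$ times the directional derivative.
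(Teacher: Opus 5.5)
Your reduction (selected branch, then its branch QP via \cite[Theorem~3.6]{Diehl2001}, then an S-stationary point of the QPCC) runs in the right direction for an existence claim. Step~2, however, has a genuine gap, and it cannot be closed under the assumptions of Theorem~\ref{th:mpcc_sesitivity} alone.

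\textbf{The gap.} The sentence ``these are exactly the sign conditions satisfied by $z^{\I}(\bar x+\tau v)$, because the QP solution reproduces the first-order expansion, including the active set along the ray'' is false. Degeneracy of a pair in the QPCC at $\Delta w=\tau D_v w$ is a first-order property, namely $\nabla H_i^\top D_v w=0$. Along the nonlinear selection, a weakly active $H_i\ge 0$ can instead become strictly positive at order $\tau^2$. In that case the MPCC imposes no sign on $\xi_i(\bar x+\tau v)$, but the QPCC still requires $\xi_i\ge 0$.

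\textbf{Counterexample.} Take $\min\ \frac12 w_1^2+\frac12(w_2-w_3^2)^2-w_1w_3$ s.t.\ $w_3-x=0$, $0\le w_1\perp w_2\ge 0$, with $\bar x=0$ and $v=1$.
\begin{itemize}
\item At $\bar w=0$ all multipliers vanish. MPCC--LICQ holds, and MPCC--SSOSC holds because the Hessian restricted to $d_3=0$ is the identity.
\item The branch $w_1=0$ gives the smooth selection of isolated local minimizers $w(x)=(0,x^2,x)$, $\xi(x)=-x$, $\nu=\lambda=0$. Hence $D_vw=(0,0,1)$ and $D_v\xi=-1$.
\item The exact-Hessian QPCC at $\bar z$ is $\min\ \frac12\Delta w_1^2+\frac12\Delta w_2^2-\tau\Delta w_1$ s.t.\ $0\le \Delta w_1\perp\Delta w_2\ge 0$, with $\Delta w_3=\tau$.
\item Your candidate $(0,0,\tau)$ is a degenerate pair with $\xi_1=-\tau<0$, so it is not S-stationary.
\item The only S-stationary point is $(\tau,0,\tau)$. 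It is the derivative of the other branch, $w(x)=(x,0,x)$.
\end{itemize}
Your fallback, rerunning the case analysis on the QPCC (``same multiplier signs, so the same branch''), fails for the same reason. At $\tau=0$ the relevant multipliers are all zero and carry no information about the branch. Here the nonlinear problem keeps both branches, while its linearization keeps only one.

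\textbf{Repair.} The mismatch requires $\bar\xi_i=\bar\nu_i=0$. So you need either PULSC (Theorem~\ref{th:mpcc_sesitivity}(c)) or a weaker claim about a suitably chosen selection. Under PULSC your Step~2 goes through. Take $i\in\I\cap\I_{00}(\bar w)$.
\begin{itemize}
\item $\nu_i\ge 0$ holds because it is a QP inequality multiplier.
\item If $\bar\xi_i>0$, then $\xi_i\ge 0$ for small $\tau$.
\item If $\bar\xi_i=0$, then $\bar\nu_i>0$, so $H_i\ge 0$ is strongly active in $\BNLP_{\I}$. It stays active along $w^{\I}(\bar x+\tau v)$ by Corollary~\ref{lem:active_set_stabilization}, and also to first order. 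Hence the pair is degenerate in both problems, and B-stationarity of the selection gives $D_v\xi_i=\lim_{\tau\downarrow 0}\xi_i(\bar x+\tau v)/\tau\ge 0$.
\end{itemize}
The case $i\in\I^{\mathrm C}\cap\I_{00}(\bar w)$ is symmetric.

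\textbf{Comparison with the paper.} The paper argues in the opposite direction. It starts from an S-stationary point of the QPCC, identifies it via \cite{Nurkanovic2026} as a solution of the QP of some branch NLP, and applies \cite[Theorem~3.6]{Diehl2001} to that branch. This sidesteps your Step~2, but it implicitly lets the QPCC choose the branch, and hence the selection. The example shows that this choice is not automatic.

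A minor point: Theorem~\ref{th:fiacco_nlp_sensitivity}(e) only gives existence of $D_v z$. The identity $\Delta z^{\I}/\tau=D_v z^{\I}(\bar x)$ comes from \cite[Theorem~3.6]{Diehl2001}, since the QP at $\bar z$ is the directional-derivative QP scaled by $\tau$. That identity does not need the active set to be constant along the ray.
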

\color{black}

\textit{Proof.}
It follows from the proofs of Theorems~\ref{th:mpcc_sesitivity} and Theorem~\ref{th:mpcc_jump}, that each B-stationary point $w(x)$ is also a KKT point of a corresponding BNLP.
Next, it was established in the proof of Theorem~\ref{th:sqpcc_local_convergence}(cf.~\cite{Nurkanovic2026}) that an S-stationary point of a QPCC is also a solution of the QP applied to such a BNLP.
Hence, for a sufficiently small $\tau$, a single SQPCC iteration is equivalent to an SQP iteration applied to the BNLP, and the result follows directly from~\cite[Theorem~3.6]{Diehl2001}.
\qed

Clearly, the results hold for Theorem~\ref{th:mpcc_jump}~(b), but with the restriction $\tau \in [0,\tau_s)$.
Note that the QPCC enables the computation of directional derivatives not only when the usual inequality active set changes~\cite{Jittorntrum1984}, but also when complementarity active-set changes occur, e.g., in Figs.~\ref{fig:mpcc_pulsc} and \ref{fig:mpcc_jump}.
Such directional derivatives cannot be computed via QPs.

\section{Path-following via the SQPCC method}\label{sec:path_follow}
In the literature, continuously computing approximations to the solution map of a parametric optimization problem is referred to as path-following~\cite{Diehl2009c}.
This is what essentially all real-time MPC algorithms do, and here we investigate how this is done for MPCCs via the SQPCC method.

In many real-time MPC algorithms, only a single SQP iteration is carried out in the \textit{feedback phase} (after a new state estimate $x_{k+1}$ becomes available)~\cite{Diehl2009c}.
Here, we consider the generalization of this idea to the SQPCC method as a core building block for hybrid real-time MPC algorithms.
Section~\ref{sec:mpc} presents several algorithms based on this principle, which further improve the current linearization point through additional computations in the \textit{preparation phase} (in-between samples, before the next state estimate is available).

In particular, we consider a parameter sequence $\{x_k\}_{k\geq 0}$ in the parametric MPCC~\eqref{eq:mpcc_ocp}, with solutions denoted compactly by $\bar z^{k} = z(x_k)$.  
For each new parameter value $x_{k+1}$, a single QPCC is solved at the linearization point $z^{k}$ to obtain $z^{k+1}$, which is an approximation of $\bar z^{k+1}$:
\begin{mini!}[2]
	{\substack{\Delta w \in \R^{n}}}{\nabla f^{k,\top} \Delta w + \frac{1}{2} \Delta w^\top  Q^k \Delta w \label{eq:qpcc_rti_obj}}
	{\label{eq:qpcc_rti}}{}
	\addConstraint{h^k + \nabla h^{k,\top} \Delta w + M x_{k+1}}{=0 \label{eq:qpcc_rti_eq}}
	\addConstraint{g^k +\nabla g^{k,\top} \Delta w}{\leq0 \label{eq:qpcc_rti_ineq}}
		\addConstraint{0 \!\leq \!G^k \!+\! \nabla G^{k,\top} \! \Delta w \!\perp\! H^k \!+\! \nabla H^{k,\top}\! \Delta w }{\!\geq \!0 \label{eq:qpcc_rti_comp},}
\end{mini!}
Observe that, since only one SQPCC iteration is performed per parameter update, the same index $k$ is used for both the SQPCC iterations and parameters.

\subsection{A tutorial example}\label{sec:path_follow_example}
We first illustrate typical properties of the path-following algorithm that performs a single SQPCC iteration per new parameter value using an example.
Afterwards, we provide a theoretical analysis that formalizes the observed error margins.

\begin{example}
	Consider the nonlinear parametric MPCC where the parameter $x$ enters the problem linearly:
	\begin{mini!}[2]
		{\substack{w \in \R^{7}}}{(w_1+1.5)^4 + \sum_{i=2}^{6} w_i^2 }{\label{eq:patthfollow_mpcc}}{}
		\addConstraint{w_7-x}{=0}
		\addConstraint{-w_7^3 -w_4 - w_6}{= 0 }
		\addConstraint{w_7^3 - w_5 - w_6}{= 0 }
		\addConstraint{w_2+w_3}{= 1 }
		\addConstraint{w_1 \!-\! w_2(1\!+\!0.25w_7^2\!+\!0.15\cos(10w_7)) \!-\! 0.5 w_3 w_7^2}{\!\geq\! 0 }
		\addConstraint{0 \leq w_2 \perp w_4}{\geq 0}
		\addConstraint{0 \leq w_3 \perp w_5}{\geq 0.}
	\end{mini!}
	It can be verified that the closed form solution for $x \leq 0$:  $w_1 = 0.5x^2$, $w_2 = 0$, $w_3 = 1$, $w_4 = -2x^3$, $w_5 = 0$, $w_6 = x^3$,$w_7 = x$; and for
	$x > 0$: $w_1 = 1+0.25x^2+0.15\cos(10x)$, $w_2 = 1$, $w_3 = 0$, $w_4 = 2 x^3$, $w_5 = 0$, $w_6 = -x^3$, and $w_7 = x$.
	We observe that the solution map has a discontinuity at $x = 0$.
	The solution map of $w_1(x)$ is illustrated in Fig.~\ref{fig:control_law}.
\end{example}

First, consider the solution map of the QPCC~\eqref{eq:qpcc_rti} associated with the parametric MPCC~\eqref{eq:patthfollow_mpcc}.  
The QPCC-linearization point is the MPCC solution $z(\bar x)$ at $\bar x = -0.1$.  
Additionally, we also consider the solution map of the QP obtained from the BNLP at $z(\bar x)$.
In an MPC setting, a single SQP step applied to the solution of this BNLP provides a QP tangential predictor.
It is well-known that the solution map of a convex QP is continuous piecewise affine.  
The feasible set of a QPCC is the union of the feasible sets of the corresponding QPs, and by Theorem~\ref{th:mpcc_jump}~(b) the solution map may be discontinuous.  
Accordingly, the QPCC solution map is a piecewise affine, possibly discontinuous, approximation of the MPCC solution.  

\begin{figure}[t!]
	\centering
	\includegraphics[width=0.24\textwidth]{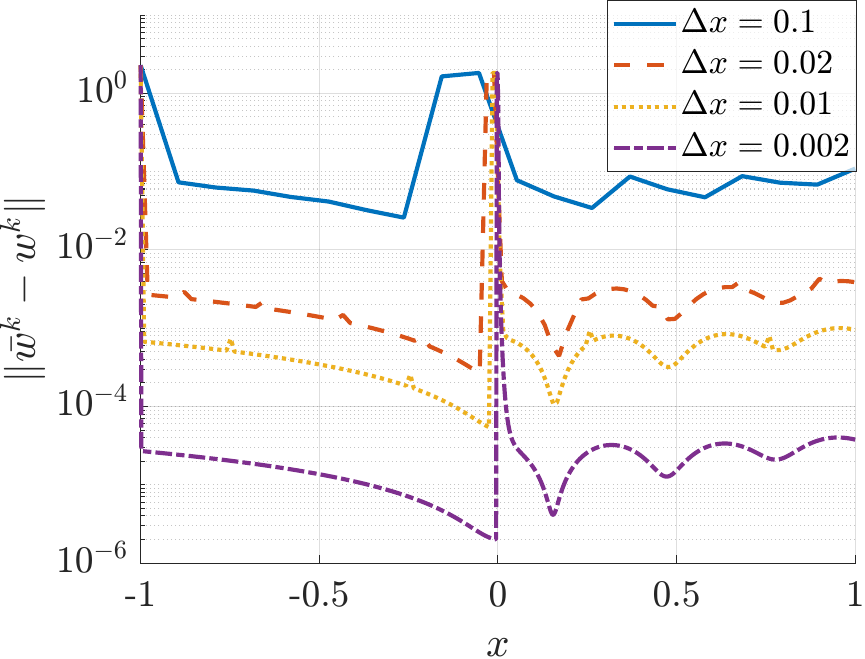}
	\includegraphics[width=0.24\textwidth]{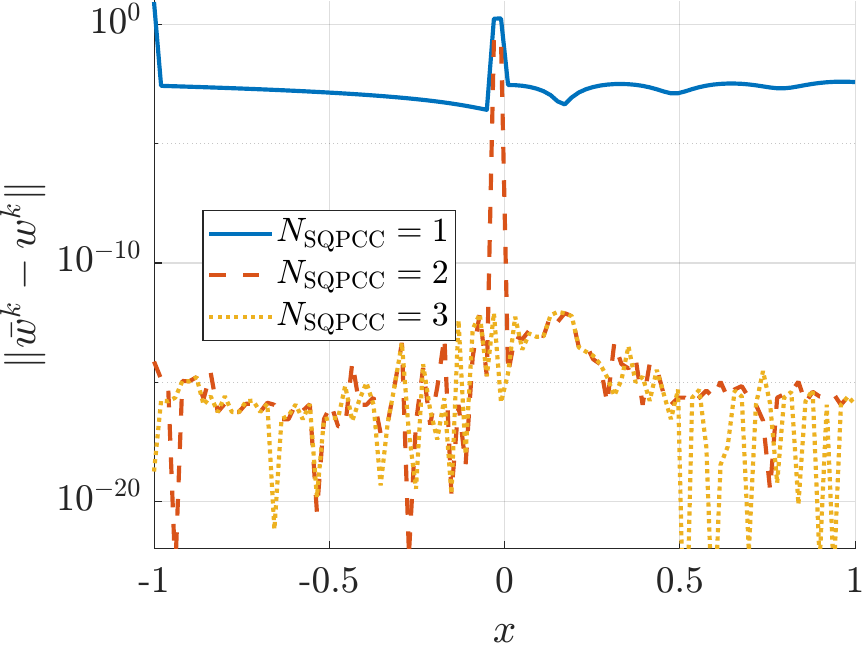}
	\vspace{-0.5cm}
	\caption{
		Tracking error of SQPCC path-following schemes. 
		Left: single-step SQPCC tracking for different parameter increments $\Delta x$. 
		Right: error reduction with multiple SQPCC iterations per parameter.
	}
	\vspace{-0.35cm}
	\label{fig:sampling_time_sqp}
\end{figure}

Fig.~\ref{fig:control_law} illustrates this behavior: as long as the active sets agree for $x < 0$, the QPCC and QP solutions are tangent to the MPCC solution.
In contrast, the QP problem becomes infeasible for parameter values at which the QPCC solution has discovered a new complementarity active set and provided a discontinuous approximation.

Next, we consider the tracking error obtained by performing a single SQPCC iteration per new parameter value $x_k$.  
This behavior is illustrated in the left plot of Fig.~\ref{fig:sampling_time_sqp}.  
Smaller parameter changes $\Delta x$ improve tracking accuracy and lead to faster error decay (around $x =0$) after solution-map discontinuities.
This is formalized in Theorem~\ref{th:path_follow_sqpcc_formal}.  
If several SQPCC iterations are performed per fixed parameter value, e.g., with an exact Hessian in~\eqref{eq:qpcc_rti}, then the quadratic convergence from Theorem~\ref{th:sqpcc_local_convergence} implies rapid error decay, including errors introduced by jumps, as shown in Fig.~\ref{fig:sampling_time_sqp}(right).
Such additional correcting steps for a fixed parameter are the basis for advanced-step type MPC defined in Sec. \ref{sec:asc} and \ref{sec:asrti}.

\subsection{Error analysis}
Next, we formalize the observations from the previous example. 
In particular, it is shown that if the parameter change $\Delta x$ is sufficiently small, the overall error remains bounded over all iterations, since the SQPCC step compensates for the variation of the problem solution. 
This remains true even if the solution exhibits jumps, provided the new solution to be tracked stays within the local convergence region. 

\begin{theorem}[One-step SQPCC error]\label{th:path_follow_sqpcc_formal}
	Consider the parametric MPCC~\eqref{eq:mpcc_ocp} on a parameter set $X\subset\R^{n_x}$ and a parameter sequence
	$\{x_k\}_{k\ge 0}\subset X$.
	For each $k$, let $\bar z^{k} :=  z(x_k)=(\bar w^k,\bar\lambda^k,\bar\mu^k,\bar\xi^k,\bar\nu^k)$ denote a selected
	solution of~\eqref{eq:mpcc_ocp} at $x_k$.
	For each new parameter value $x_{k+1}$, let $z^{k+1}$ be an S-stationary point of the QPCC ~\eqref{eq:qpcc_rti} linearized at $z^k$ and evaluated at $x_{k+1}$ and with a positive definite Hessian approximation $Q^k$.
	Further, assume the following.
	\begin{enumerate}[(a)]
		\item (Regularity).
		There exist sets $X_1,\dots,X_{N_X}\subset X$ with $X=\bigcup_{i=1}^{N_X} X_i$ such that for every $i$ there exist
		$\bar x_i\in X_i$ and $\delta^x_i>0$ with $X_i\subset \B_{\delta^x_i}(\bar x_i)$ and such that the point $\bar w(\bar x_i)$ is a local minimizer of~\eqref{eq:par_mpcc} for $x= \bar x_i$ and satisfies MPCC--LICQ and MPCC--SSOSC at $(\bar w(\bar x_i), \bar x_i)$.
		For each $i$, let $z_i(\cdot)$ denote a $\mathrm{PC}^1$ local solution selection on $X_i$ provided by
		Theorem~\ref{th:mpcc_sesitivity}, and assume that the selection $ z(\cdot)$ satisfies $z(x)=z_i(x)$ for all $x\in X_i$.
		
		\item (Jump size).
		There exists $\bar r_x>0$ such that for all $k$ under consideration,
		\(
		\|x_{k+1}-x_k\|\leq \bar r_x
		\)
		implies that the line segment
		\(
		\mathcal S(x_k,x_{k+1}) := \{(1-\tau)x_k+\tau x_{k+1}:\tau\in[0,1]\}
		\)
		intersects the boundaries $\partial X_i$ in at most one parameter value.
		Moreover, if $x_k,x_{k+1}\in X_i$ for some $i$, then $\bar z(\cdot)$ is continuous on $\mathcal S(x_k,x_{k+1})$ (due to Theorem~\ref{th:mpcc_sesitivity}).

		If $x_k\in X_i$ and $x_{k+1}\in X_j$ with $i\neq j$, then there exists a (necessarily unique) switching parameter
		$x_k^s\in \mathcal S(x_k,x_{k+1})$ at which $ z(\cdot)$ is discontinuous, and this discontinuity is of the type characterized by
		Theorem~\ref{th:mpcc_jump}~(b).
		Define the associated jump size
		\begin{align*}
			\delta_k \;:=\;
			\begin{cases}
				0,  &\textnormal{if $z(\cdot)$ is continuous on $ \mathcal S(x_k,x_{k+1})$},\\[0.2em]
				\displaystyle \Delta z^k,
				 &\textnormal{if $ z(\cdot)$ has a jump at $x_k^s$},
			\end{cases}
		\end{align*}
		with $\Delta z^k := \lim_{\tau\downarrow 0}\|\bar z(x_k^s+\tau(\Delta x_{k})-\bar z(x_k^s-\tau\Delta x_{k})\|$.
		\item (SQPCC local convergence).
		The assumptions of Theorem~\ref{th:sqpcc_local_convergence} hold at $\bar z^{k}$ for all $k$ under consideration, with a
		uniform local convergence radius $\varepsilon>0$ such that the statement of Theorem~\ref{th:sqpcc_local_convergence} applies whenever
		$z^{k}\in\B_{\varepsilon}(\bar z^{k})$.
	\end{enumerate}
	Then there exists a sufficiently small constant $0 < r_z < \varepsilon$ such that, if the initial error satisfies $\| \bar z^0 - z^0 \| \leq r_z$, then there exist constants $\alpha^k \in [0,1)$, $\eta_1^k \geq 0$, and $\beta,\, \eta_2,\, \sigma \geq 0$ (independent of $k$) such that, for all $k$,
	\begin{equation}\label{eq:path_follow_bound}
		\begin{aligned}
		\|\bar z^{k+1}-z^{k+1}\| &\leq \alpha^k\|\bar z^{k}-z^{k}\| + 2\beta\|\bar z^{k}-z^{k}\|^{2} \\
		&+ \eta_1^k\|x_{k+1}-x_{k}\| + \eta_2\|x_{k+1}-x_{k}\|^{2} \\
		&+ \alpha^k \delta_k + \sigma \delta_k^2 .
		\end{aligned}
	\end{equation}
	provided the parameter sequence satisfies, for all $k \geq 0$,
	\begin{align}\label{eq:delta_p_bound}
		\| x_{k+1}-x_{k}\| <		r^x_k := 
		\begin{cases}
			\min \Big(\bar r_x, \frac{\varepsilon - r_z - \delta_k}{\gamma}, \frac{\zeta_k}{2\eta_2}\Big),& \textnormal{if}\ \eta_2 >0, \\
			\min \Big(\bar r_x, \frac{\varepsilon - r_z - \delta_k}{\gamma}, \frac{\upsilon_k}{\eta_1^k}\Big),& \textnormal{if}\ \eta_2 =0.
		\end{cases}		
	\end{align}
	with shorthands $\zeta_k\!:=\! \sqrt{(\eta_1^k)^2\!+\!4 \eta_2 (\rho^k \!-\! \alpha^k \delta_k - \sigma \delta_k^2)) }\!-\!\eta_1^k \! >\!0$,
	$\upsilon^k:= \rho^k-\alpha^k \delta_k - \sigma \delta_k^2>0$, 
	and the jump sizes $\delta_k$ satisfy
	\begin{align}\label{eq:jump_size_bound}
		\delta_k < \begin{cases}
			\min \Big(\varepsilon - r_z, \frac{\sqrt{ (\alpha^k)^2+4\sigma \rho^k}-\alpha^k}{2 \sigma }\Big),& \textnormal{if}\ \sigma >0 \\
			\min \Big(\varepsilon - r_z, \frac{\rho^k}{\alpha^k}\Big),& \textnormal{if}\ \sigma =0 ,\\
		\end{cases}
	\end{align}
	with $\rho^k := (1- \alpha^k r_z - 2\beta r_z)r_z > 0$.
	
	Furthermore, the error remains bounded, i.e., it holds that $\|\bar z^{k} - z^{k}\| \leq r_z$ for all $k \geq 0$.
	
\end{theorem}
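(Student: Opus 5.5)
The proof goes by induction on $k$, with the induction hypothesis $\|\bar z^{k}-z^{k}\|\le r_z$; the base case is the assumption on $z^0$. At each step we view the QPCC~\eqref{eq:qpcc_rti}, linearized at $z^k$ and evaluated at $x_{k+1}$, as the first SQPCC iteration for the fixed-parameter MPCC at $x_{k+1}$, started from $z^k$. By Theorem~\ref{th:sqpcc_local_convergence}, applied at $\bar z^{k+1}$ with the uniform radius of assumption~(c), the contraction estimate~\eqref{eq:sqpcc_contraction_estimate} gives
\[
\|z^{k+1}-\bar z^{k+1}\|\le \alpha^k\|z^k-\bar z^{k+1}\|+\beta\|z^k-\bar z^{k+1}\|^2 ,
\]
where the quadratic term has been rewritten on the right using the standard Newton-type form. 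This requires showing $z^k\in\B_\varepsilon(\bar z^{k+1})$. Everything then reduces to bounding the ``drift'' $\|z^k-\bar z^{k+1}\|\le\|z^k-\bar z^k\|+\|\bar z^k-\bar z^{k+1}\|$ and expanding the square with $(a+b)^2\le 2a^2+2b^2$. This is where the factor $2\beta$ comes from; we absorb the remaining constants into $\eta_2$ and $\sigma$.

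The drift of the exact solution is handled by the case split of assumption~(b). If $x_k,x_{k+1}\in X_i$, then $z(\cdot)=z_i(\cdot)$ is $\mathrm{PC}^1$ on the segment by Theorem~\ref{th:mpcc_sesitivity}. Since a $\mathrm{PC}^1$ map is Lipschitz, Theorem~\ref{th:fiacco_nlp_sensitivity}(c) applied piecewise to the BNLPs gives $\|\bar z^{k+1}-\bar z^k\|\le\gamma\|x_{k+1}-x_k\|$, with $\delta_k=0$. If the segment crosses exactly one boundary at $x_k^s$, we split it into $[x_k,x_k^s)$ and $(x_k^s,x_{k+1}]$. On each part the selection is Lipschitz with constant $\gamma$, and the one-sided limits differ by $\delta_k$ (Theorem~\ref{th:mpcc_jump}(b)). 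Hence
\[
\|\bar z^{k+1}-\bar z^k\|\le \gamma\|x_{k+1}-x_k\|+\delta_k .
\]
Substituting into the contraction estimate yields~\eqref{eq:path_follow_bound} with $\eta_1^k:=\alpha^k\gamma$, $\eta_2:=2\beta\gamma^2$ and $\sigma:=2\beta$. The cross terms are absorbed either by $4ab\le 2a^2+2b^2$ with larger constants, or by using $\|z^k-\bar z^k\|\le r_z$ to fold them into $\eta_1^k$. The applicability condition $\|z^k-\bar z^{k+1}\|\le r_z+\gamma\|\Delta x_k\|+\delta_k<\varepsilon$ is exactly the middle term in~\eqref{eq:delta_p_bound} together with $\delta_k<\varepsilon-r_z$.

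To close the induction, insert $\|\bar z^k-z^k\|\le r_z$ into~\eqref{eq:path_follow_bound}; this gives $\alpha^k r_z+2\beta r_z^2 = r_z-\rho^k$. The requirement $\|\bar z^{k+1}-z^{k+1}\|\le r_z$ then becomes
\[
\eta_1^k t+\eta_2 t^2<\rho^k-\alpha^k\delta_k-\sigma\delta_k^2 ,\qquad t=\|x_{k+1}-x_k\| .
\]
For this to be possible the right-hand side must be positive. Solving $\sigma\delta^2+\alpha^k\delta-\rho^k<0$ gives~\eqref{eq:jump_size_bound}, or $\delta_k<\rho^k/\alpha^k$ when $\sigma=0$. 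Solving the quadratic in $t$ gives the root $\zeta_k/(2\eta_2)$, or $\upsilon_k/\eta_1^k$ when $\eta_2=0$. Finally, $r_z$ is chosen small enough that $\rho^k>0$, i.e.\ $\alpha^k r_z+2\beta r_z<1$; this is possible since $\alpha^k\le\bar\alpha<1$.

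The main obstacle is the jump case. There, Theorem~\ref{th:sqpcc_local_convergence} must be applied around the new branch $\bar z^{k+1}$, from a linearization point $z^k$ that lies near the old branch. We must also check that the QPCC S-stationary point obtained is the one tracking the selected branch. This holds only because the convergence theorem guarantees existence of an S-stationary point in $\B_\varepsilon(\bar z^{k+1})$, and we take that point, e.g.\ via warm-starting. A second delicate point is Lipschitz continuity across the one-sided pieces with the uniform constant $\gamma$, which I would justify from the finite cover $X_1,\dots,X_{N_X}$ by taking the maximum of the constants over the finitely many BNLPs involved.
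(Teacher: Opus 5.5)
Your argument follows the paper's proof step for step. The QPCC at $z^k$ with parameter $x_{k+1}$ is read as one SQPCC iteration toward $\bar z^{k+1}$. The drift bound $\|\bar z^{k+1}-\bar z^k\|\le\gamma\|x_{k+1}-x_k\|+\delta_k$ comes from splitting the segment at $x_k^s$, and the applicability condition gives the middle entry of~\eqref{eq:delta_p_bound}. Two quadratic inequalities then give~\eqref{eq:jump_size_bound} and the last entry of~\eqref{eq:delta_p_bound}.

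\textbf{The gap: closing the induction.} This step does not go through as written. With the statement's $\rho^k=(1-\alpha^k r_z-2\beta r_z)r_z$ one has $r_z-\rho^k=(\alpha^k+2\beta)r_z^2$, not $\alpha^k r_z+2\beta r_z^2$. The two agree only if $\alpha^k=0$ or $r_z=1$. So inserting $\|\bar z^k-z^k\|\le r_z$ into~\eqref{eq:path_follow_bound} and imposing $\eta_1^k t+\eta_2 t^2+\alpha^k\delta_k+\sigma\delta_k^2<\rho^k$ only yields $\|\bar z^{k+1}-z^{k+1}\|<r_z+\alpha^k r_z(1-r_z)$. This exceeds $r_z$ whenever $\alpha^k>0$ and $r_z<1$, so the induction hypothesis is not reproduced.

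\textbf{The repair.} What~\eqref{eq:path_follow_bound} actually requires is $\rho^k:=(1-\alpha^k-2\beta r_z)r_z$. With this $\rho^k$, the same two quadratic inequalities give~\eqref{eq:jump_size_bound} and~\eqref{eq:delta_p_bound}, and the induction closes. Positivity of the corrected $\rho^k$ for a single, $k$-independent $r_z$ needs $2\beta r_z<1-\bar\alpha$ with $\alpha^k\le\bar\alpha<1$ for all $k$. Your uniform $\bar\alpha$ is therefore genuinely needed here. With the statement's $\rho^k$, $\alpha^k<1$ and $r_z\le 1/(1+2\beta)$ would have sufficed, which is itself a symptom of the slip. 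The paper's proof makes the identical mistake when it defines $\rho^1$, so you inherited it from the statement. Still, the identity you wrote is false and must be replaced.

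\textbf{Two smaller remarks.} First, after $(a+b)^2\le 2a^2+2b^2$ and $b\le\gamma t+\delta_k$, the leftover cross term is $4\beta\gamma t\delta_k$. It involves $\delta_k$, not the tracking error, so it cannot be folded into $\eta_1^k$ via $\|z^k-\bar z^k\|\le r_z$; it could only be folded in via $\delta_k<\varepsilon-r_z$. Absorbing it with Young's inequality instead gives the paper's $\eta_2=4\beta\gamma^2$, $\sigma=4\beta$, not your $2\beta\gamma^2$, $2\beta$. Second, you note that $z^{k+1}$ must be the S-stationary QPCC point inside $\B_\varepsilon(\bar z^{k+1})$, since Theorem~\ref{th:sqpcc_local_convergence} only guarantees that such a point exists. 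That is a correct refinement which the paper's proof leaves implicit.
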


\textit{Proof.}
Consider the starting point with its respective error $\| \bar z^0 - z^0 \| < r_z$. 
Solving the QPCC~\eqref{eq:qpcc_rti} with linearization point $z^0$ but parameter value $x_1$ can be interpreted as an SQPCC iteration from the initialization point $z^0$ towards the solution $\bar z^1$. 
However, to use~\eqref{eq:sqpcc_contraction_estimate}, we require that $\| \bar z^1 - z^0 \| \leq \varepsilon$, i.e, it should hold that
\begin{align}\label{eq:sqpcc_next_guess}
	\| \bar z^1 - z^0 \|  \leq \| \bar z^1 - \bar z^0 \| + \| \bar z^0 - z^0 \| \leq \| \bar z^1 - \bar z^0 \| + r_z < \varepsilon.
\end{align}
Next, we bound the term $\| \bar z^1 - \bar z^0 \|$.
Suppose that $x_0 \in X_i$ and $x_1 \in X_j$ with $i \neq j$. 
Let $\Delta x_0 := x_1 - x_0$. 
Then there exists $\tau_s \in (0,1)$ such that
$x(\tau) := x_0 + \tau \Delta x_0 \in X_i$ for $\tau \in [0,\tau_s)$ and, without loss of generality, $x(\tau) \in X_j$ for $\tau \in [\tau_s,1]$.
It follows from Theorem~\ref{th:mpcc_sesitivity} that $z(x(\tau))$ is Lipschitz continuous with constant $\gamma_i$ on $\tau \in [0,\tau_s)$ and with constant $\gamma_j$ on $\tau \in [\tau_s,1]$, but is discontinuous at $\tau_s$. 
From the def. of $\delta^k$, we have $\| \bar z(x(\tau_s^-)) - \bar z(x(\tau_s^+)) \| = \delta_1$, where $\delta_1$ is the size of the discontinuity.
Using this, we have:
\begin{align}\label{eq:delta_u_bound}
	\begin{split}
		&\| \bar z^1 - \bar z^0 \| = \| \bar z (x(1)) - \bar z(x(0)) \| \\
		&\leq \| \bar z (x(1)) \!+\! \bar z (x(\tau_s^-))  \!-\! \bar z (x(\tau_s^-)) \\ 
		&\!+\!\bar z (x(\tau_s^+))  \!-\! \bar z (x(\tau_s^+)) \!-\! \bar z(x(0)) \|\\
		&\leq \gamma_j \| x(1) - x(\tau_s^+)\| + \delta_1 + \gamma_i \| x(0) - x(\tau_s^-)\|\\
		&\leq ((1-\tau_s)\gamma_j + \tau_s \gamma_i )\| x(1) - x(0)\| + \delta_1 \\
		&\leq \underbrace{\max(\gamma_i,\gamma_j)}_{:= \gamma }\|x_1 - x_0\| + \delta_1. 
	\end{split}
\end{align}
Note that if $X_i = X_j$, i.e., no jump occurs and $\delta_1 = 0$, then \eqref{eq:delta_u_bound} reduces to the Lipschitz continuity estimate for $\bar z(x)$ on $X_i$.
Combining this with~\eqref{eq:sqpcc_next_guess} and the requirement that $\| \bar z^1 - z^0 \| \leq \varepsilon$, we obtain the bound:
\begin{align*}
	\|x_1  - x_0 \| \leq r^x_1 := \frac{\varepsilon-r_z-\delta_1}{\gamma}.
\end{align*}	
Since $r_z >0$ can be as small as needed, we in addition require a bound on the jump: $\delta_1 < \varepsilon - r_z$, so that $r^x_1  > 0$.
Hence, we are allowed to use~\eqref{eq:sqpcc_contraction_estimate} and obtain:
\begin{align*}
	\| \bar z^1 - \bar z^0\| \leq \alpha^1 \| \bar z^1  - z^0\| + \beta \| \bar z^1  - z^0\|^2.
\end{align*}
In the two terms on the right-hand side, add and subtract $\bar z^0$, then apply the triangle and Cauchy–Schwartz inequalities together with the estimate~\eqref{eq:delta_u_bound} to obtain:
\begin{align*}
	&\| \bar z^1 - z^1\| \leq \alpha^1 \| \bar z^0  - z^0\| + 2\beta \| \bar z^0 - z^0\|^2\\
	&+ \alpha^1\gamma \| x_1  - x_0\| + 4\beta\gamma^2 \| x_1  - x_0\|^2
	+ \alpha^1 \delta_1 + 4 \beta (\delta_1)^2.
\end{align*}
Define the constants $\eta^k_1 := \alpha^k \gamma$, $\eta_2 : = 4\beta\gamma^2$ and $\sigma:= 4 \beta$.
It follows from Theorem~\ref{th:sqpcc_local_convergence} that $\| \bar z^1 - z^1 \| < \varepsilon$.
To ensure the stronger bound $\|\bar z^1 - z^1\| < r_z$, additional restrictions must be imposed on $\delta_1$ and on the parameter step size $r^x_1$, that is, on $\|x_1 - x_0\| < r^x_1$.
From the previous inequality, we have
\begin{align*}
	0 <  \eta_1^1 r^x_1 + \eta_2 (r^x_1)^2  < \underbrace{(1- \alpha^1 r_z - 2\beta r_z)r_z}_{:= \rho^1} - \alpha^1 \delta_1 + \sigma (\delta_1)^2 .
\end{align*}
This inequality makes sense only if the right-hand side is positive, which yields a bound on $\delta_1$.
Since $r_z > 0$ can be chosen arbitrarily small, it follows that $\rho^1 > 0$.
Thus, we solve the quadratic inequality:
\[
\sigma (\delta_1)^2 + \alpha^1 \delta_1  -  \rho^1  < 0.
\]
If $\sigma = 0$, then $\delta_1 < \frac{\rho^1}{\alpha^1}$.
Otherwise, if $\sigma > 0$, then $\delta_1 < \frac{\sqrt{ (\alpha^1)^2+4\sigma \rho^1}-\alpha^1}{2 \sigma }$.
The r.h.s. is always positive since $\rho^1 > 0$ can be made sufficiently small by shrinking $r_z>0$.
Combining all bounds on $\delta_1$ yields the bound in~\eqref{eq:jump_size_bound} with $k = 1$.

Finally, to obtain a bound on $r^x_1$ we solve the quadratic inequality:
\[
\eta_2 (r^x_1)^2 + \eta_1^1 r^x_1  - \rho^1 + \alpha^1 \delta_1 + \sigma (\delta_1)^2 < 0.
\]
If $\eta_2 = 0$, then $r^x_1 < \frac{\rho-\alpha^1 \delta_1 - \sigma (\delta_1)^2}{\eta_1^1}$.
If $\eta_2 > 0$, then
\(
r^x_1 < \frac{\sqrt{(\eta_1^1)^2 +4\eta_2(\rho-\alpha^1 \delta_1 - \sigma (\delta_1)^2)}-\eta_1^1}{2\eta_2}.
\)
The term under the square root is always positive due to the bound on $\delta_1$ since $\rho-\alpha^1 \delta_1 - \sigma (\delta_1)^2 > 0$ under the computed bounds on $\delta_1$.
Summarizing all computed bounds on $r^x_1$ yields~\eqref{eq:delta_p_bound}.
Applying this inductively for all $k \geq 1$ gives the conclusion of the theorem and completes the proof.
\qed

Theorem~\ref{th:path_follow_sqpcc_formal} makes explicit how the admissible jump sizes $\delta_k$ (Eq.~\eqref{eq:jump_size_bound}) and parameter variations $\Delta x_k:=\|x_{k+1}-x_{k}\|$  (Eq.~\eqref{eq:delta_p_bound}) depend on the local convergence radius $\varepsilon$, the prescribed error bound $r_z$, and SQPCC convergence rate constants in~\eqref{eq:sqpcc_contraction_estimate}, in order to keep the error bounded.
A larger convergence radius $\varepsilon$ and a smaller target error bound $r_z$ allow for larger $\Delta x_k$ and larger admissible jump sizes $\delta_k$.
Conversely, using sufficiently small parameter variations $\Delta x_k$ (which correspond to higher sampling rates in MPC) keeps the error bounded, allows for larger admissible jump sizes $\delta_k$, and lead to faster decay of the jump-induced errors, as observed in Fig.~\ref{fig:sampling_time_sqp} (left).

	Certain complementarity active-set changes correspond to switches in the hybrid system and yield discontinuous solutions $z(x)$ of a discrete-time OCP (cf. Theorem~\ref{th:mpcc_jump}).
	The jump-induced tracking error acts as a bounded disturbance to the closed-loop MPC system. Theorem~\ref{th:path_follow_sqpcc_formal} quantifies its magnitude and provides the conditions under which the hybrid MPC algorithms keep the error uniformly bounded.

\section{Real-time algorithms for hybrid MPC}\label{sec:mpc}
Building on the results of the previous sections on parametric MPCCs and SQPCC-based tracking, we propose several approximate hybrid MPC algorithms. 
These methods generalize established real-time MPC ideas~\cite{Diehl2009c} to hybrid systems.

We consider a sequence of sampling times $\{t_k\}$ and corresponding state estimates $\{x_k\}$.
The goal of real-time MPC algorithms is to compute, at each sampling time, a new solution approximation $z^{k+1} \approx \bar z^{k+1}$ and to apply the corresponding feedback at time $t_{k+1}+t_\Delta$ with as small a computational delay $t_\Delta$ as possible.
To reduce the delay, the algorithms split their computations into a \textit{preparation phase}, for $t \in [t_{k},t_{k+1})$, where all computations that can be carried out before a new state estimate $x_{k+1} $ is available, and a \textit{feedback phase}, for $t \in [t_{k+1},t_{k+1} \!+\! t_{\Delta}]$, which uses the minimal amount of computation required to obtain a good solution approximation at each sampling instant.
In hybrid MPC, this minimal computation is a single QPCC solve.

\subsection{The hybrid real-time iteration (HyRTI)}\label{sec:rti}
The classical real-time iteration (RTI) scheme for smooth nonlinear MPC performs a single SQP step per sampling interval $[t_{k},t_{k+1}]$.
In the hybrid setting, this idea extends to performing a single SQPCC iteration, which yields the Hybrid RTI (HyRTI).
Observe that, in optimal control problems~\eqref{eq:discrete_time_ocp} (both classical and hybrid), the new state estimate $x_{k+1}$ enters the equality constraints~\eqref{eq:mpcc_ocp} linearly.
Therefore, all function and derivative evaluations, depending on $z^k$, and required to set up the QPCC~\eqref{eq:qpcc_rti} can be carried out without knowledge of $x_{k+1}$.
Thus, we have the following computations in the HyRTI.
\begin{itemize}
	\item[] \textbf{Preparation phase:}
	At $t \in [t_{k},t_{k+1})$ construct the QPCC~\eqref{eq:qpcc_rti} at the linearization point $z^k$.
	
	\item[] \textbf{Feedback phase:}
	At $t = t_{k+1}$ update the parameter to $x_{k+1}$, solve QPCC~\eqref{eq:qpcc_rti} and return $u_{k+1}$ at $t = t_{k+1}\!+\!t_{\Delta}$.
\end{itemize}
HyRTI is the one-step SQPCC algorithm with conveniently ordered computations.
Hence, it follows from Theorem~\ref{th:path_follow_sqpcc_formal}, that the HyRTI has a bounded error over all iterates. 
It can handle active-set changes both in the standard inequality but also in the complementarity constraints.
It is suitable for small to medium scale systems with fast dynamics.

\subsection{The hybrid advanced-step controller (HyASC)}\label{sec:asc}
The advanced-step controller for nonlinear MPC introduced in~\cite{Zavala2009} solves in the preparation phase an optimal control problem with a predicted initial state $x_{k+1}^{\mathrm{pred}}$ to convergence. 
In the feedback phase, the active sets at this solution are fixed and, by applying the implicit function theorem, a tangential predictor is computed in the feedback phase~\cite[Section 3]{Zavala2009}.
Just like the QP, the usual tangential predictor cannot capture complementarity active-set changes, cf. Fig~\ref{fig:control_law}.
Instead, we must solve a QPCC in the feedback phase.

We generalize this idea to the hybrid setting and introduce the hybrid advanced-step controller~(HyASC), which requires the following computations.
\begin{itemize}
	\item[] \textbf{Preparation phase:}
	For $t \in [t_{k},t_{k+1})$ perform:
	\begin{enumerate}[1.)]
		\item At $t = t_k$, using the previous state and controls $(x_k,u_k)$, predict the next state $x_{k+1}^{\mathrm{pred}}$.
		\item Over $t \in [t_{k},t_{k+1})$, using the initial guess $z^k$, solve the OCP~\eqref{eq:mpcc_ocp} with the parameter $x_{k+1}^{\mathrm{pred}}$ to convergence and obtain $\bar z^{k+1}_{\mathrm{pred}}$.
		\item Construct the QPCC~\eqref{eq:qpcc_rti} at the lin. point $\bar z^{k+1}_{\mathrm{pred}}$.
	\end{enumerate}
	
	\item[] \textbf{Feedback phase:}
	At $t = t_{k+1}$ update the parameter to $x_{k+1}$, and during $t \in [t_{k+1},t_{k+1}+ t_{\Delta}]$ solve the QPCC~\eqref{eq:qpcc_rti} and return $u_0(x_{k+1})$.
\end{itemize}
The MPCC in step 2.) of the preparation phase can be solved with any common MPCC algorithm~\cite{Kim2020,Nurkanovic2024b}. 
This is the most computationally intensive part, and in its basic form, HyASC is more suitable for systems with slower dynamics.
The main benefit of HyASC is that the full MPCC solves can compensate errors that do not satisfy the conditions of Theorem~\ref{th:path_follow_sqpcc_formal}, provided that $x_{k+1}^{\mathrm{pred}}$ is sufficiently close to $x_{k+1}$. 

Lastly, we discuss the feedback-phase error when a QPCC is solved but the linearization point is $\bar z^{k+1}_{\mathrm{pred}}$ instead of $z^k$.
Provided that $\|\bar z^{k+1}_{\mathrm{pred}} - \bar z^{k+1}\| < r_z < \varepsilon$, and that the remaining assumptions of Theorem~\ref{th:path_follow_sqpcc_formal} hold, the error estimate~\eqref{eq:path_follow_bound} applies.
In particular, the bound reduces to:
\begin{align*}
	\begin{split}
	\|\bar z^{k+1}-z^{k+1}\|
	&\leq
	\eta_1^k \,\|x_{k+1}-x_{k+1}^{\mathrm{pred}}\|
	+\eta_2\,\|x_{k+1}-x_{k+1}^{\mathrm{pred}}\|^{2}\\
	&+\alpha^k \delta_k + \sigma \delta_k^2.
	\end{split}
\end{align*}
In addition, if no complementarity active-set changes happen between $x_{k+1}$ and $x_{k+1}^{\mathrm{pred}}$, then  $\delta_k = 0$, and the bound is even tighter.
Further, using the exact Hessian yields in this case the bound $O(\|x_{k+1}-x_{k+1}^{\mathrm{pred}}\|^{2})$.

In practice, to reduce the feedback time in the HyASC, in the feedback phase one may solve a branch QP corresponding to the MPCC's solution found in the preparation phase.
If the feedback QP introduces a large error, the next preparation-phase solve can compensate for it.

\subsection{The hybrid advanced step RTI (HyAS-RTI)}\label{sec:asrti}
The advanced-step RTI (AS-RTI) combines the ideas of the ASC and RTI methods~\cite{Nurkanovic2019a,Frey2024a}.
In the feedback phase, it solves a QP as in RTI, while in the preparation phase it does not solve the problem to convergence but performs $N^{\mathrm{as}} \geq 1$ iterations of an (inexact) SQP method to improve the linearization point $z^k$ for the feedback QP~\cite{Nurkanovic2019a,Frey2024a}.
These ideas extend naturally to the hybrid setting, yielding the hybrid AS-RTI (HyAS-RTI) method, with computations as follows.
\begin{itemize}
	\item[] \textbf{Preparation phase:}
	For $t \in [t_{k},t_{k+1})$ perform:
	\begin{enumerate}[1.)]
		\item At $t = t_k$, using the previous state and controls $(x_k,u_k)$, predict the next state $x_{k+1}^{\mathrm{pred}}$.
		\item Over $t \in [t_{k},t_{k+1})$, using the initial guess $z^k$, perform $N^{\mathrm{as}}$ iterations~\eqref{eq:qpcc_rti} on the OCP~\eqref{eq:mpcc_ocp} with parameter $x_{k+1}^{\mathrm{pred}}$ to obtain the solution approximation $z^{k+1}_{\mathrm{pred}}$.
		\item Construct the QPCC~\eqref{eq:qpcc_rti} at the point $z^{k+1}_{\mathrm{pred}}$.
	\end{enumerate}
	\item[] \textbf{Feedback phase:}
	At $t =t_{k+1}$ update the parameter to $x_{k+1}$, during  $t \in [t_{k+1},t_{k+1}+ t_{\Delta}]$ solve the QPCC~\eqref{eq:qpcc_rti} and return $u_0(x_{k+1})$.
\end{itemize}

For the sake of brevity, we consider only SQPCC iterations in the preparation phase.
It is straightforward to extend this to inexact SQPCC or zero-order SQPCC variants as in~\cite{Nurkanovic2019a,Frey2024a}.
Of practical interest are variants that do not significantly increase the computational load compared to HyRTI but still reduce the numerical error.
In particular, performing a single SQPCC iteration, i.e., $N^{\mathrm{as}}=1$, in the preparation phase and a QPCC in the feedback phase yields a practical algorithm.

If the conditions of Theorem~\ref{th:path_follow_sqpcc_formal} hold, then a more detailed error analysis analogous to~\cite{Frey2024a} can be derived.

\section{Numerical example}\label{sec:examples}
In this section, we showcase the new hybrid MPC algorithms on a robotic manipulation example.
The example shows that the algorithms are able to discover new contact sequences online, react to disturbances, and handle changing references.

We consider a two-link robot modeling a finger, which is supposed to spin a fixed wheel.
This example is inspired by a similar example from~\cite{Posa2014}, and a full description of the dynamic equations is also available in the example's GitHub repository.

The system has three degrees of freedom $q = (q_1,q_2,\theta)$, namely the two joint angles and the wheel angle.
The state consists of the positions and velocities, with $x = (q,v) \in \R^6$, where $v = \dot q$.
The controls are the two joint torques $u \in \R^2$ of the finger.
The wheel is not actuated, and it can be moved only by interaction with the finger through normal and frictional contact forces.
These forces are modeled via complementarity constraints, which make this a hybrid system.

The goal in the optimal control problem (OCP) is for the unactuated wheel to follow a time-varying reference, with minimal control effort of the finger.
The continuous-time OCP, subject to the finger-wheel complementarity Lagrangian system with friction and impacts, reads as:
\begin{figure}[t]
	\centering
	\includegraphics[width=0.47\textwidth]{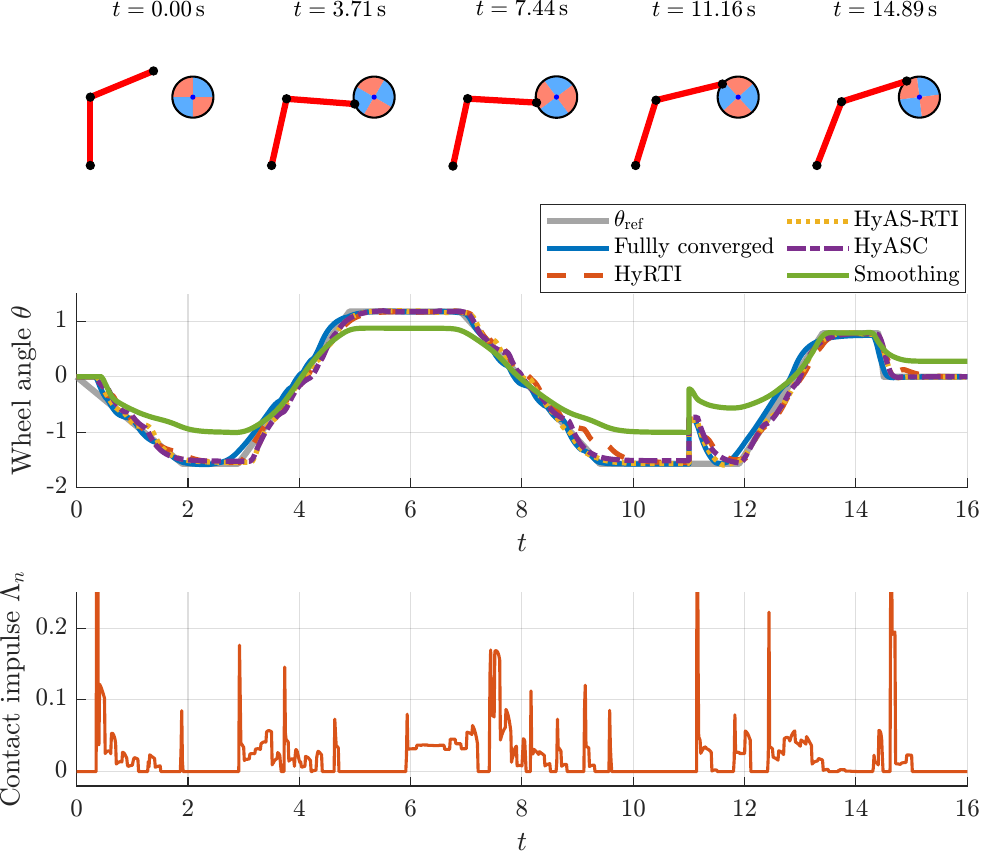}
	\vspace{-0.15cm}
	\caption{
		The top plot shows snapshots of the two-link finger spinning a wheel. A video is available at~\url{https://youtu.be/JJea48uhcyA}. 
		The middle plot compares the wheel reference $\theta_\mathrm{ref}(t)$ with the closed-loop wheel angles for different MPC algorithms. 
		The bottom plot shows the normal contact impulse for HyRTI.
	}
	\vspace{-0.65cm}
	\label{fig:states}
\end{figure}
\begin{mini*}|s|
	{\substack{x(\cdot),u(\cdot)\\\lambda_{\mathrm{n}}(\cdot), \lambda_{\mathrm{t}}(\cdot) }}{
		\displaystyle
		\int_0^T
		\ell(x(t),u(t)) \dd t + E(x(T))
	}{}{}
	\addConstraint{x(0)}{= x_0}{}
	\addConstraint{\dot q(t)}{= v, }{\ t\in [0,T] }
	\addConstraint{\!\!\!\!\!\!\!M(q) \dot v(t)}{\!=\! F(q,u) \!+\! J_\mathrm{n}(q)\lambda_{\mathrm{n}} \!+\! J_\mathrm{t}(q)\lambda_{\mathrm{t}},}{
		\ t\in [0,T] }{}
	\addConstraint{\textnormal{(Velocity jump law)}}{}
	\addConstraint{0\leq }{\lambda_{\mathrm{n}} \perp \varphi(q) \geq 0}{\ t\in [0,T]}
	\addConstraint{\lambda_{\mathrm{t}}}{\in -\mu_{\mathrm{f}} \mathrm{sign}(J_\mathrm{t}(q)^\top v) }{\ t\in [0,T]}
	\addConstraint{u_{\min}}{\le u(t) \le u_{\max},}{\ t\in [0,T].}
\end{mini*}
\noindent Here, $M(q)$ is the joint inertia matrix of the wheel and the robot, which is the main source of nonlinearity.
The vector $F(q,u)$ collects all generalized forces acting on the system, including gravity, Coriolis and control forces.
The function $\varphi(q)$ is the shortest distance between the fingertip and the wheel, and it is complementary to the normal contact force $\lambda_{\mathrm{n}}$.
Further, $J_\mathrm{n}(q) = \nabla_q \varphi(q)$ is the contact normal, and $J_\mathrm{t}(q)$ is the tangent vector at the contact point.
Together they define the contact frame where the normal contact $\lambda_{\mathrm{n}}$, and friction forces $\lambda_{\mathrm{t}}$ act.
The friction force is determined via Coulomb's friction law $\lambda_{\mathrm{t}} \in -\mu_{\mathrm{f}} \mathrm{sign}(J_\mathrm{t}(q)^\top v)$, with friction coefficient $\mu_{\mathrm{f}} = 0.8$.
The set-valued $\mathrm{sign}(\cdot)$ function can be rewritten via linear complementarity constraints~\cite{Brogliato2016}.
Additionally, rigid-body dynamics require a velocity jump law.
We regard here inelastic impacts where the normal contact velocity jumps to zero after each contact, i.e., $J_\mathrm{n}(q)^\top v = 0$.

The initial state is $x_0 = (0,\frac{3\pi}{8},0,0,0,0)$.
The prediction horizon is $T = 1.5$, and the stage cost is defined as $\ell(x(t),u(t)) = (x(t)-x_\mathrm{r}(t))^\top Q (x(t)-x_\mathrm{r}(t)) + u(t)^\top R u(t)$, and the terminal cost as $E(x(T)) = (x(T)-x_\mathrm{r}(T))^\top Q_T (x(T)-x_\mathrm{r}(T))$.
The weight matrices are $Q = \mathrm{diag}(0.5, 0.1, 20,1, 1, 10^{-3}),\ R = \mathrm{diag}(10^{-2},10^{-2}),$ and $Q_T = \mathrm{diag}(1,1,10,0.1,0.1,1)$.
The time-varying reference for $x_\mathrm{r}(t)$ is depicted in middle plot of Fig.~\ref{fig:states}, all other components have a zero reference, other than $x_{\mathrm{r},2} = \frac{3\pi}{8}$.
The torque bounds are $u_{\min} = (-15,-15)$ and $u_{\max} = (15,15)$.

We discretize the OCP using $N=15$ equidistant control intervals, resulting in a control interval and MPC sampling time of $0.1$s.
The optimal control problem is discretized with an implicit Euler time-stepping scheme for Lagrangian systems, cf.~\cite{Brogliato2016,Posa2014}.
The first two control intervals have four integrator steps, the third two, and the remaining ones have one per control interval.
For closed-loop simulation, in contrast, we simulate the dynamics with eight implicit Euler steps, in order to emulate model mismatch via a more accurate simulation.
The control path constraints are enforced pointwise on the control discretization grid.
After discretization, the OCP becomes a parametric MPCC of the form~\eqref{eq:discrete_time_ocp}.
\begin{figure}[t]
	\centering
	\includegraphics[width=0.48\textwidth]{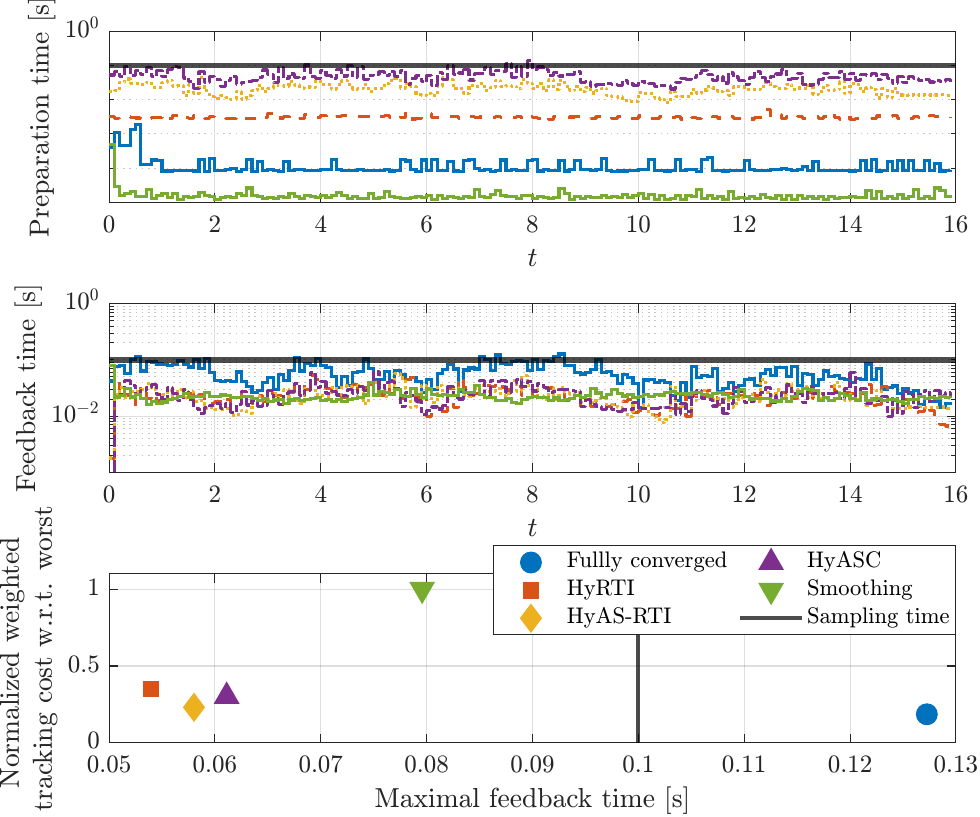}
	\vspace{-0.25cm}
	\caption{
	Preparation and feedback time for various MPC algorithms, and a scaled Pareto plot comparison. 
	}
	\vspace{-0.69cm}
	\label{fig:numerics}
\end{figure}
We perform a closed-loop MPC simulation for $T_s = 16$ seconds, and compare five MPC schemes:
\vspace{-0.20em}
\begin{enumerate}[{A}1.]
	\item Solving the MPCC to full convergence at every iteration with \texttt{CCOpt}~\cite{Pozharskiy2026};
	\item The HyRTI method, with \texttt{CCOpt} as QPCC solver.
	\item The HyASC method, with \texttt{CCOpt} for solving the MPCC to convergence in the preparation phase, and as QPCC solver in the feedback phase.
	\item HyAS-RTI method, with a single SQPCC step in the feedback phase, with \texttt{CCOpt} as QPCC solver.
	\item Smoothed MPCC, which requires solving a standard NLP. 
	We use \texttt{Ipopt}~\cite{Waechter2006}.
	Smoothing is achieved by replacing complementarity constraints $0 \leq G(w) \perp H(w) \geq 0$ by $G(w) \geq 0, H(w) \geq 0$, $G_i(w)H_i(w) = \tau$.
\end{enumerate}
\vspace{-0.1em}
In our experiments, after some tuning we picked $\tau = 10^{-1}$. 
Lower values of $\tau$ yield better approximations but led to frequent failures of the NLP solver.
Since the objective is of least square type, in the QPCCs, we use a Gauss-Newton Hessian approximation.
We tried also commercial MIQP for solving the QPCC, but they were significantly slower than \texttt{CCOpt} and unreliable in convergence, cf. ~\cite[Sec. 6.3.4]{Pozharskiy2026}.

All MPC algorithms are implemented in the MATLAB version of the open-source package \texttt{nosnoc}~\cite{Nurkanovic2024a}, which relies on \texttt{CasADi}~\cite{Andersson2019} for symbolic expressions and automatic differentiation.
The \texttt{nosnoc} repository contains also several other examples from different application domains, which allow for future testing of the proposed methods.\footnote{More examples at \url{https://github.com/nosnoc/nosnoc/tree/main/examples/mpc}.}

In the closed-loop simulation, the MPC controller has to react to model-plant mismatches and to reference changes.
Furthermore, at $t_d = 11$s, we add a disturbance by changing the wheel angle to $\theta(t_d)\!+\!0.5\pi$ to emulate an adversarial external spin.
Fig.~\ref{fig:states} shows the resulting closed-loop wheel position $\theta$ for various MPC controllers.
We see that all proposed MPC algorithms (A2--A4) successfully track the reference and quickly compensate for the disturbance.
Moreover, they have performance comparable to that of the fully converged MPC (A1).
Only the smoothed MPC (A5) has unsatisfactory performance, mostly due to the large approximation error.
The bottom plot of Fig.~\ref{fig:states} shows the resulting normal contact impulses for the HyRTI scheme.
Whenever the contact force jumps from zero to some positive value, a new contact is made, or vice versa, a contact is broken.
This happens quite frequently, and indicates that HyRTI finds numerous switches online that were not prespecified or provided via initialization.

In Fig.~\ref{fig:numerics}, we compare the computation times of our proposed MPC algorithms.
The computations are split into the preparation and feedback phases, cf. Sec.~\ref{sec:mpc}.
The bottom plot of Fig.~\ref{fig:numerics} shows the worst-case feedback time of all algorithms versus the closed-loop tracking cost rescaled w.r.t. the worst performing algorithm, which is smoothing (A5).
Our real-time algorithms (A2-A4) have consistently lower feedback times and are well below the sampling time of 0.1s.
The advanced step algorithms (A3,A4) have more expensive preparation phases due to the additional computations.
Smoothing and the fully converged approach have no significant preparation cost, as the computations cannot be split. 
In conclusion, the proposed hybrid MPC algorithms are real-time feasible, with notable savings in the feedback time compared to a full solve, without notably compromising the tracking performance.
\vspace{-0.25cm}
\section{Conclusion and future work}\label{sec:conclusion}
This developed a comprehensive framework for real-time MPC algorithms for hybrid dynamical systems. 
The key observation is that QP feedback corrections are, in general, insufficient across complementarity active-set changes, whereas QPCC subproblems retain the structure needed to capture switching.
On the theoretical side, we characterized local continuity, and discontinuity properties of parametric MPCC solution maps, showed how one-sided directional derivatives can be obtained from a single QPCC.
We derived local one-step error bounds for SQPCC-based path-following in the presence of solution jumps. 
Based on this, generalizing established MPC paradigms, we proposed several real-time hybrid MPC schemes that use these QPCC corrections in the feedback phase and illustrated their behavior on an example.
An open-source implementation of the algorithm is also provided within \texttt{nosnoc}.

Future work will investigate the stability and robustness properties of our new algorithms.
In classical MPC, Riccati-based structure-exploiting methods led to speed-ups of up to two orders of magnitude over generic solvers~\cite{Frison2020a}.
In our experiments, we used generic solvers. 
Hence, we expect that similar developments for QPCCs and MPCCs will lead to comparable speed ups, and enable new challenging hybrid MPC applications.

\section*{References}
\vspace{-0.6cm}

\raggedbottom
\vspace{-1.2cm}
\begin{IEEEbiography}[{\includegraphics[width=1in,height=1.25in,clip,keepaspectratio]{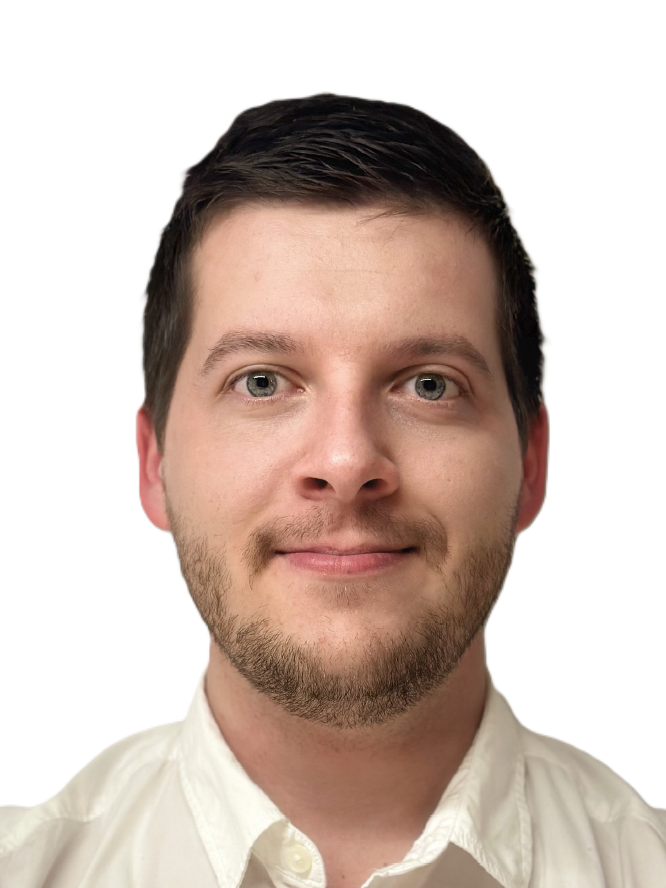}}]{Armin Nurkanovi\'c} received the B.Sc. degree  from the Faculty of Electrical Engineering, Tuzla, Bosnia and Herzegovina, in 2015, and the M.Sc. degree  in Electrical and Computer Engineering, Technical University of Munich, Germany, in 2018. 
In 2023, he received his Ph.D. degree in Engineering from the University of Freiburg, Germany. 
He received the IEEE Control Systems Letters Outstanding Paper Award in 2022 and was a finalist for the 2024 European Systems \& Control PhD Thesis Award. 
In 2025, he served as an interim professor of mathematical optimization at TU Braunschweig. 
His research interests include numerical methods for model predictive control, nonlinear optimization, and control of hybrid systems.
\end{IEEEbiography}
\vspace{-1.1cm}
\begin{IEEEbiography}[{\includegraphics[width=1in,height=1.25in,clip,keepaspectratio]{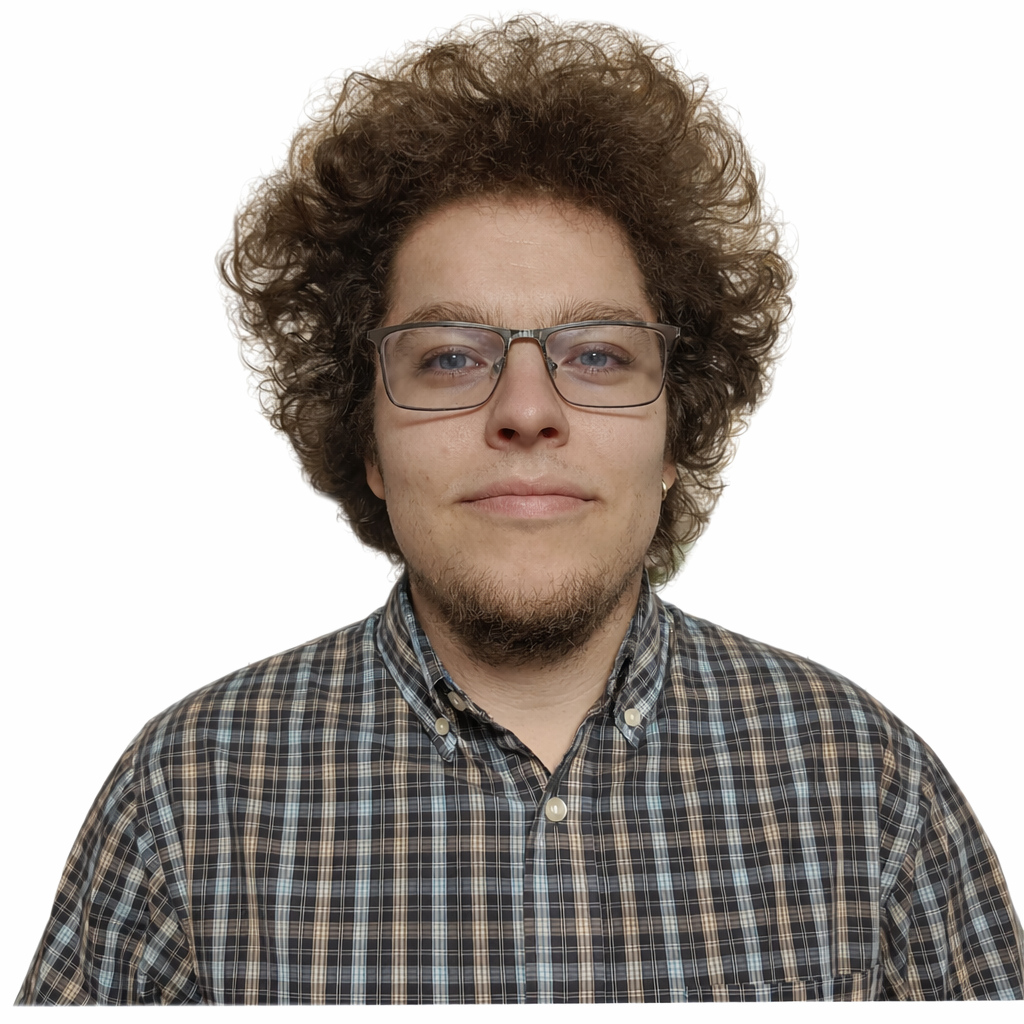}}]{Anton Pozharskiy} is a  Ph.D. student at the University of Freiburg.
	He received a B.Sc degree in Electrical Engineering and a B.Sc degree in Computer Science from the University of Maryland in May 2021. 
	From September 2021 to November 2023, he completed a M.Sc. degree in Embedded Systems Engineering at University of Freiburg.
	His research interests include numerical methods for mathematical programs with complementarity constraints and their application in control.
\end{IEEEbiography}
\vspace{-1.2cm}
\begin{IEEEbiography}[{\includegraphics[width=1in,height=1.25in,clip,keepaspectratio]{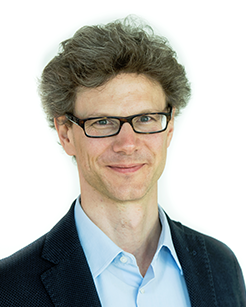}}]{Moritz Diehl} (Member, IEEE) is Professor of Systems Control and Optimization at the University of Freiburg, Germany, where he serves as director of the Department of Microsystems Engineering (IMTEK) and as managing director of the university’s Center for Renewable Energy (ZEE). 
He studied physics and mathematics at Heidelberg University, Germany, and Cambridge University, U.K., in 1993-1999, and received the Ph.D. degree from Heidelberg University in 2001. 
From 2006 to 2013, he was a Professor at the Department of Electrical Engineering, KU Leuven, Belgium. 
Since 2013, he is full professor at the Department of Microsystems Engineering at his current affiliation in Freiburg, where he is also affiliated with the Department of Mathematics. 
His research interests span optimization and control, ranging from numerical method development to applications in various branches of engineering, with a focus on embedded real-time implementations and renewable energy systems. 
\end{IEEEbiography}

\end{document}